\title{\bf A characterisation of projective unitary equivalence of 
finite frames}
\author{Tuan Chien and Shayne Waldron\\
 \\ 
Department of Mathematics \\ University of Auckland\\
Private
Bag 92019, Auckland, New Zealand\\
e--mail: waldron@math.auckland.ac.nz}
\date{\today}
\newtheorem{theorem}{Theorem}[section]
\newtheorem{lemma}[theorem]{Lemma}
\newtheorem{example}[theorem]{Example}
\newtheorem{corollary}[theorem]{Corollary}
\newtheorem{remark}[theorem]{Remark}
\newenvironment{proof}{{\noindent \bf
Proof:}}{\hfill$\Box$\bigskip}
\newcommand{\CC}{\mathbb{C}}
\newcommand{\RR}{\mathbb{R}}
\newcommand{\TT}{\mathbb{T}}
\newcommand{\FF}{\mathbb{F}}
\newcommand{\ZZ}{\mathbb{Z}}
\newcommand{\Z}{\mathbb{Z}}
\newcommand{\C}{\mathbb{C}}
\def\Cd{\CC^d}
\def\Rd{\RR^d}
\def\pmat#1{\begin{pmatrix}#1\end{pmatrix}}
\def\inpro#1{\langle#1\rangle}
\def\norm#1{\Vert#1\Vert}
\def\dep{\mathop{\rm dep}\nolimits}
\def\Aut{\mathop{\rm Aut}\nolimits}
\def\Gram{\mathop{\rm Gram}\nolimits}
\def\gr{\mathop{\rm gr}\nolimits}
\let\ga\alpha
\let\gb\beta
\let\gam\gamma
\let\gG\Gamma
\let\gth\theta
\let\gd\delta
\let\gD\Delta
\let\gl\lambda
\let\gL\Lambda
\let\gs\sigma
\def\Implies{\hskip1em\Longrightarrow\hskip1em}
\def\cB{{\cal B}}
\def\cC{{\cal C}}
\def\cH{{\cal H}}
\def\cG{{\cal G}}
\def\cS{{\cal S}}
\def\cT{{\cal T}}
\def\spam{\mathop{\rm span}\nolimits}
\def\Iff{\hskip1em\Longleftrightarrow\hskip1em}
\def\Implies{\hskip1em\Longrightarrow\hskip1em}
\newif\ifdraft\def\draft{\drafttrue\hoffset=.8truecm\showlabeltrue
\def\comment##1{{\bf comment: ##1}}
\headline={\sevenrm \hfill \ifx\filenamed\undefined\jobname\else\filenamed\fi%
(.tex) (as of \ifx\updated\undefined???\else\updated\fi)
 \TeX'ed at {\hour\time\divide\hour by 60{}%
\minutes\hour\multiply\minutes by 60{}%
\advance\time by -\minutes
\the\hour:\ifnum\time<10{}0\fi\the\time\  on \today\hfill}}
}
\begin{document}

\maketitle
\bigskip

\begin{abstract}

It is well known that two finite sequences of vectors in
inner product spaces are unitarily equivalent if and only if
their respective inner products (Gram matrices) are equal.
Here we present a corresponding result for the
projective unitary equivalence of two sequences of
vectors (lines) in inner product spaces, i.e., that  a finite 
number of (Bargmann) projective (unitary) invariants are equal.
This is based on an algorithm to recover the sequence of vectors 
(up to projective unitary equivalence) from
a small subset of
 these projective invariants.
We consider the implications for the characterisation of
SICs, MUBs and harmonic frames up to projective unitary
equivalence.
We also extend our results to projective similarity of vectors.

\end{abstract}

\bigskip
\vfill

\noindent {\bf Key Words:}
Projective unitary equivalence,
Gram matrix (Gramian),
harmonic frame,
equiangular tight frame,
SIC-POVM (symmetric informationally complex positive operator valued measure),
MUBs (mutually orthogonal bases),
triple products,
Bargmann invariants,
frame graph,
cycle space,
chordal graph,
projective symmetry group

\bigskip
\noindent {\bf AMS (MOS) Subject Classifications:}
primary
05C50, \ifdraft	Graphs and linear algebra (matrices, eigenvalues, etc.) \else\fi
14N05, \ifdraft	Projective techniques \else\fi
14N20, \ifdraft Algebraic geometry: Configurations and arrangements of linear subspaces \else\fi
15A83, \ifdraft Matrix completion problems \else\fi

secondary
15A04, \ifdraft	Linear transformations, semilinear transformations \else\fi
42C15, \ifdraft General harmonic expansions, frames  \else\fi
81P15, \ifdraft Quantum measurement theory \else\fi
81P45, \ifdraft	Quantum information, communication, networks \else\fi

\vskip .5 truecm
\hrule
\newpage

\section{Introduction}

Finite sequences of vectors $\Phi=(v_j)$ and $\Psi=(w_j)$ 
in (real or complex) inner product spaces  $\cH_1$ and $\cH_2$ are
{\bf unitarily equivalent} if there is a unitary map $U:\cH_1\to\cH_2$, such that
$$ w_j=Uv_j, \qquad\forall j, $$
and {\bf projectively unitarily equivalent} 
if there is a unitary map $U$ and unit scalars $c_j$, such that
$$ w_j=c_jUv_j, \qquad\forall j, $$
equivalently
$$ w_jw_j^*=U(v_jv_j^*)U^*, \qquad\forall j. $$
A finite spanning sequence of vectors for an inner product space 
is also called a {\bf finite frame}.

The {\bf Gram matrix} ({\bf Gramian}) of $\Phi=(v_j)_{j=1}^n$ is
$$ \Gram(\Phi)=[\inpro{v_k,v_j}]_{j,k=1}^n. $$
We take our inner products to be linear in the first variable.
It is well known (cf.\ \cite{H62}) that 
\begin{itemize}
\item $\Phi$ and $\Psi$ are unitarily equivalent if and only if
\begin{equation}
\Gram(\Phi)=\Gram(\Psi).  \label{Gramcdn}
\end{equation}
\item $\Phi$ and $\Psi$ are unitarily projectively equivalent if and only if
\begin{equation}
\Gram(\Psi)=C\Gram(\Phi)C^*,  
\label{projGramcdn}
\end{equation}
where $C$ is the diagonal matrix with diagonal entries $c_j$.
\end{itemize}

Clearly, (\ref{Gramcdn}) can be used to verify unitary equivalence,
as can \eqref{projGramcdn} to verify projective unitary equivalence 
for real inner product spaces (where
$c_j\in\{-1,1\}$, cf.\ Example \ref{projinRd}). 
For complex inner product spaces, 
we have no knowledge of $c_j$, other than $|c_j|=1$,
and so \eqref{projGramcdn}, i.e.,
 $$  \inpro{w_j,w_k} = c_j\overline{c_k}\inpro{v_j,v_k}, \qquad\forall j,k, $$
does not provide a practical method for verifying
projective unitary equivalence.

Following \cite{MCS01}, we define the
{\bf $m$--vertex Bargmann invariants} or {\bf $m$--products} 
of a sequence of $n$ vectors $\Phi=(v_j)$
to be
\begin{equation}
\label{mproducts}
\gD(v_{j_1},v_{j_2},\ldots,v_{j_m}):=
\inpro{v_{j_1},v_{j_2}}\inpro{v_{j_2},v_{j_3}}
\cdots\inpro{v_{j_m},v_{j_1}},
\quad 1\le j_1,\ldots, j_m\le n.
\end{equation}
In particular, (cf.\ \cite{AFF09}), we define 
the
{\bf triple products} 
to be
\begin{equation}
\label{tripeqn}
T_{jk\ell} := \gD(v_j,v_k,v_\ell)=  \inpro{v_j,v_k}\inpro{v_k,v_\ell}\inpro{v_\ell,v_j}.
\end{equation}

We observe that the $m$--products are {\em projective unitary invariants}, 
e.g., for $m=3$
\begin{align}
\gD(c_j U v_j,c_k U v_k,c_\ell U v_\ell)
&= \inpro{c_j U v_j,c_k U v_k}
\inpro{c_k U v_k,c_\ell U v_\ell}
\inpro{c_\ell U v_\ell,c_j U v_j} \nonumber  \\
& = c_j \overline{c_k} \inpro{U v_j,U v_k}
c_k \overline{c_\ell} \inpro{U v_k,U v_\ell}
c_\ell \overline{c_j} \inpro{U v_\ell,U v_j} \nonumber  \\
& = \inpro{v_j,v_k}
\inpro{v_k,v_\ell}
\inpro{v_\ell,v_j} \nonumber \\
&= \gD(v_j,v_k,v_\ell).
\label{tripareinvariant}
\end{align}
We will show (Theorem \ref{nbargmanprojunieq}) 
that a sequence of vectors $(v_j)$
is determined up to projective unitary equivalence by all its
$m$--products. 
Our proof
relies on the fact that certain small subsets of the $m$--products are 
sufficient. These depend on which of the $m$--products are nonzero,
which is conveniently encapsulated
by the {\it frame graph}.

We define the {\bf frame graph} (cf.\ \cite{AN13}) of 
a sequence of vectors $(v_j)$ to be the graph with vertices 
$\{v_j\}$ (or the indices $j$ themselves) and
$$ \hbox{an edge between $v_j$ and $v_k$, $j\ne k$} \Iff
\inpro{v_j,v_k}\ne 0. $$ 
Clearly, projectively unitarily equivalent frames have the 
same frame graph.

In Section 2, generalising the results of \cite{AFF09} for SICs,
we show that if the common frame graph of $\Phi=(v_j)$ and 
$\Psi=(w_j)$ is complete, 
then they are
 projectively unitarily equivalent 
if and only if
their $3$--products (triple products) are equal (Theorem \ref{Chartheorem}).
Later we will show this condition extends
(Theorem \ref{3-cyclechar}), e.g., to the case
when the frame graph is chordal.
We apply this result to sequences of equiangular lines
(including SICs),
then give an example to show that the $3$--products 
do not determine projective unitary equivalence in general
(Example \ref{cycleexample}).

In Sections 3 and 4, we show that projective unitary equivalence
is characterised the $m$--products
(Theorem \ref{nbargmanprojunieq}).
We show that is sufficient to consider only a small subset
of these projective invariants, which can be determined from
the frame graph (Theorem \ref{nbargmanprojunieqII}).
We give an algorithm for constructing
all sequences with given $m$--products, and
consider the classification of MUBs (Theorem \ref{MUBtheorem}).

In Section 5, we apply our results to the classification of
sequences of vectors up to (projective) similarity
(Theorem \ref{ChartheoremII}). 

Finally, in Section 6, we consider the classification of harmonic 
frames up to projective unitary equivalence (Theorem \ref{harmonicequiv}).

\section{Complete frame graphs}

A sequence of $n\ge d$ unit vectors $(v_j)$ in $\Cd$ is {\bf equiangular} if
for some $C\ge0$
$$  |\inpro{v_j,v_k}|=C, \qquad j\ne k. $$
For $C>0$, such a sequence has a complete frame graph (no zero
inner products), as does a generic sequence of vectors.
In \cite{AFF09}, it was shown that
$d^2$ equiangular vectors in $\Cd$
are characterised up to projective unitary equivalence 
by their triple products ($3$--products).
Here we modify the argument 
to when the frame graph is complete. 
We then show, by an example,
that this result does not extend to a general sequence of vectors.

The {\bf angles} of a sequence of vectors $\Phi=(v_j)$
are the $\gth_{jk}\in\TT:={\RR/(2\pi\ZZ)}$ defined by
$$  \inpro{v_j,v_k}=|\inpro{v_j,v_k}|e^{i\gth_{jk}}, 
\qquad \inpro{v_j,v_k}\ne 0.$$
Since $\inpro{v_j,v_k}=\overline{\inpro{v_k,v_j}}$, these
satisfy
$$ \gth_{jk}=-\gth_{kj}. $$
A sequence of vectors may have few angles, e.g., an orthogonal
basis has no angles.

\begin{lemma}
\label{proequivlemma}
Let $\Phi=(v_j)$ and $\Psi=(w_j)$ be finite sequences of vectors in Hilbert
spaces, with angles $\gth_{jk}$ and $\gth_{jk}'$. Then
$\Phi$ and $\Psi$ are projectively unitarily equivalent if and only if
\begin{enumerate}
\item 
Their Gramians have entries with equal moduli, i.e.,
$$ |\inpro{w_j,w_k}| = |\inpro{v_j,v_k}|, \qquad \forall j,k. $$
\item Their angles are ``gauge equivalent'', i.e., 
there exist $\phi_j\in\TT$ with.
$$ \gth_{jk}' = \gth_{jk}+\phi_j-\phi_k, \qquad \forall j,k. $$
\end{enumerate}
\end{lemma}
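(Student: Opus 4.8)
The plan is to prove this by translating the definition of projective unitary equivalence into the combined conditions on moduli and angles, using the known Gramian characterisation \eqref{projGramcdn} as the engine. First I would establish the forward direction: assume $\Psi$ and $\Phi$ are projectively unitarily equivalent, so by \eqref{projGramcdn} we have $\inpro{w_j,w_k}=c_j\overline{c_k}\inpro{v_j,v_k}$ with $|c_j|=1$. Taking moduli immediately gives condition~1. For condition~2, write each $c_j=e^{i\phi_j}$ (possible since $|c_j|=1$); then whenever $\inpro{v_j,v_k}\ne0$ we have $\inpro{w_j,w_k}=e^{i(\phi_j-\phi_k)}\inpro{v_j,v_k}\ne0$, and comparing arguments gives $\gth_{jk}'=\gth_{jk}+\phi_j-\phi_k$, which is exactly the gauge equivalence. (One should note the equalities of angles are to be read in $\TT$, and that condition~1 ensures the two Gramians have zeros in exactly the same positions, so the angle relation is being asserted on the same index set.)

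For the converse, assume conditions~1 and~2 hold. Set $c_j:=e^{i\phi_j}$ with the $\phi_j$ supplied by condition~2, and let $C=\diag(c_j)$. I would then check that $C\,\Gram(\Phi)\,C^*=\Gram(\Psi)$ entrywise. For an entry $(j,k)$ with $\inpro{v_j,v_k}\ne0$: condition~2 gives $e^{i\gth_{jk}'}=e^{i(\gth_{jk}+\phi_j-\phi_k)}$, and condition~1 gives $|\inpro{w_j,w_k}|=|\inpro{v_j,v_k}|$, so multiplying the modulus by the phase yields $\inpro{w_j,w_k}=c_j\overline{c_k}\inpro{v_j,v_k}=(C\,\Gram(\Phi)\,C^*)_{jk}$. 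For an entry $(j,k)$ with $\inpro{v_j,v_k}=0$: condition~1 forces $\inpro{w_j,w_k}=0$ too, so both sides vanish. Hence $\Gram(\Psi)=C\,\Gram(\Phi)\,C^*$ with $C$ diagonal and unitary, and \eqref{projGramcdn} gives that $\Phi$ and $\Psi$ are projectively unitarily equivalent.

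The only genuine subtlety — and the step I would be most careful about — is the handling of the zero inner products and the precise meaning of ``their angles are gauge equivalent''. The angles $\gth_{jk}$ are only defined when $\inpro{v_j,v_k}\ne0$, so condition~2 is implicitly a statement only about the edges of the common frame graph; it is condition~1 that guarantees $\Phi$ and $\Psi$ share that graph in the first place, so the two conditions must be used together rather than separately. Once that is noted, the argument is essentially a bookkeeping exercise: writing unit scalars in polar form and reading \eqref{projGramcdn} off one entry at a time. I would present it compactly, since no new idea beyond the known Gramian characterisation is needed — the lemma is really just the observation that the diagonal conjugation in \eqref{projGramcdn} decouples into a modulus condition and an additive ``potential'' (coboundary) condition on the phases.
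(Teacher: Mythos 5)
Your proof is correct and follows essentially the same route as the paper: both directions reduce to the entrywise identity $\inpro{w_j,w_k}=c_j\overline{c_k}\inpro{v_j,v_k}$ and split it into modulus and argument, with the converse amounting to checking that $C\Gram(\Phi)C^*=\Gram(\Psi)$ (the paper phrases this as the unitary equivalence of $(w_j)$ with the rescaled sequence $(e^{i\phi_j}v_j)$, which is the same computation). Your explicit handling of the zero entries and of the fact that condition 1 forces a common frame graph is a welcome bit of extra care, but not a different argument.
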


\begin{proof}
First suppose that $\Phi$ and $\Psi$ are projectively unitarily 
equivalent, i.e., $w_j=c_j Uv_j$, where $U$ is unitary
and $c_j=e^{i\phi_j}$. Then
\begin{align*}
e^{i\gth_{jk}'} |\inpro{w_j,w_k}|
&= \inpro{w_j,w_k}
= \inpro{c_j Uv_j,c_k Uv_k}
= c_j \overline{c_k} \inpro{Uv_j,Uv_k}
= e^{i(\phi_j-\phi_k)} \inpro{v_j,v_k} \\
&= e^{i(\phi_j-\phi_k)} e^{i\gth_{jk}} |\inpro{v_j,v_k}|
\end{align*}
By equating the moduli and then the arguments we obtain 1 and 2.

Conversely, suppose that 1 and 2 hold. 
Let $\tilde v_j := e^{i\phi_j} v_j$.
Then
\begin{align*}
\inpro{\tilde v_j,\tilde v_k}
&= \inpro{e^{i\phi_j} v_j, e^{i\phi_k}\tilde v_k}
= e^{i(\phi_j-\phi_k)} \inpro{ v_j, v_k}
= e^{i(\phi_j-\phi_k)} e^{i\gth_{jk}}  |\inpro{ v_j, v_k}| \\
&= e^{i\gth_{jk}'}  |\inpro{ w_j, w_k}|
= \inpro{ w_j, w_k}. 
\end{align*}
Thus $(w_j)$ is unitarily equivalent to $(\tilde v_j)$,
which is projectively unitarily equivalent to $(v_j)$,
and so $\Psi$ and $\Phi$ are projectively unitarily equivalent.
\end{proof}

We observe that $|\inpro{v_j,v_k}|$ can be calculated from 
the triple products of (\ref{tripeqn}), since
\begin{equation}
T_{jjj}=\inpro{v_j,v_j}^3=\norm{v_j}^6, \qquad
T_{jjk} 
=\inpro{v_j,v_j}|\inpro{v_j,v_k}|^2
=T_{jjj}^{1\over 3}|\inpro{v_j,v_k}|^2. 
\label{modulifromtriples}
\end{equation}

\begin{theorem} 
\label{Chartheorem} (Characterisation)
Let $\Phi=(v_j)_{j\in J}$ and $\Psi=(w_j)_{j\in J}$ 
be finite sequences of vectors in Hilbert 
spaces. 
Then
\begin{enumerate}
\item $\Phi$ and $\Psi$ are unitarily equivalent if and only if 
their Gramians are equal, i.e.,
$$ \inpro{v_j,v_k}=\inpro{w_j,w_k}, \qquad \forall j,k. $$
\item If the frame graphs of $\Phi$ and $\Psi$ are complete,
then they 
are projectively unitarily equivalent if and only if 
their triple products are equal, i.e.,
$$ \inpro{v_j,v_k} \inpro{v_k,v_\ell} \inpro{v_\ell,v_j}
= \inpro{w_j,w_k} \inpro{w_k,w_\ell} \inpro{w_\ell,w_j},
\qquad \forall j,k,\ell. $$
\end{enumerate}
\end{theorem}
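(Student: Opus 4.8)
The plan is to treat part~1 as the classical Gram matrix characterisation and to reduce part~2 to Lemma~\ref{proequivlemma}. For part~1, one direction is immediate (a unitary preserves inner products); for the converse, one defines a linear map $U$ on $\spam\{v_j\}$ by $v_j\mapsto w_j$, notes that equality of the Gramians makes it well defined and inner product preserving on all linear combinations, and extends it to a unitary of the ambient spaces, so that $w_j=Uv_j$ for all $j$. This is standard (cf.\ \cite{H62}), so I would only state it. For part~2, the forward implication is nothing but the invariance computation~\eqref{tripareinvariant}: a projective unitary equivalence preserves every $m$--product, in particular every triple product.

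The content is the converse of part~2, and I would obtain it by verifying the two hypotheses of Lemma~\ref{proequivlemma}. First I would check equality of the moduli $|\inpro{w_j,w_k}|=|\inpro{v_j,v_k}|$ for all $j,k$. Since the common frame graph is complete --- and, discarding the trivial cases $|J|\le1$, has at least two vertices --- every $v_j$ and $w_j$ is nonzero, so $T_{jjj}=\norm{v_j}^6>0$; then~\eqref{modulifromtriples} expresses $\norm{v_j}^2$ and $|\inpro{v_j,v_k}|$ purely in terms of $T_{jjj}$ and $T_{jjk}$, which are equal for $\Phi$ and $\Psi$ by hypothesis.

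Next I would check that the angles are gauge equivalent. By completeness, for every triple of distinct indices the triple product $\gD(v_j,v_k,v_\ell)$ is nonzero, so $\arg\gD(v_j,v_k,v_\ell)=\gth_{jk}+\gth_{k\ell}+\gth_{\ell j}$; since the triple products agree and their moduli have just been matched, their arguments agree, giving
$$ \gth_{jk}+\gth_{k\ell}+\gth_{\ell j}\equiv\gth_{jk}'+\gth_{k\ell}'+\gth_{\ell j}'\pmod{2\pi},\qquad\forall j,k,\ell. $$
Fixing a base vertex $j_0\in J$ and setting $\phi_j:=\gth_{jj_0}'-\gth_{jj_0}$ (which makes sense because the edge $\{j,j_0\}$ is present, and gives $\phi_{j_0}=0$), I would specialise this identity to $\ell=j_0$ and use the antisymmetry $\gth_{jk}=-\gth_{kj}$, $\gth_{jk}'=-\gth_{kj}'$ to conclude $\gth_{jk}'=\gth_{jk}+\phi_j-\phi_k$ for all $j,k$ (the cases in which $j$, $k$ or $j_0$ coincide being trivial). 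Lemma~\ref{proequivlemma} then yields projective unitary equivalence.

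None of the computations is hard; the only place the completeness hypothesis is genuinely needed is the reduction to a single reference vertex $j_0$, where one needs every inner product $\inpro{v_j,v_{j_0}}$ to be nonzero so that $\gth_{jj_0}$ is defined and the $\ell=j_0$ specialisation of the above cocycle identity is legitimate. This is precisely the step that breaks down for a general frame graph, which is what Example~\ref{cycleexample} will exploit.
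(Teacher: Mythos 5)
Your proposal is correct and follows essentially the same route as the paper: both reduce the converse of part~2 to Lemma~\ref{proequivlemma}, recover the moduli from \eqref{modulifromtriples}, and obtain gauge equivalence of the angles by fixing a reference index and exploiting the antisymmetry $\gth_{jk}=-\gth_{kj}$ in the identity $\gth_{jk}+\gth_{k\ell}+\gth_{\ell j}=\gth_{jk}'+\gth_{k\ell}'+\gth_{\ell j}'$. Your $\phi_j:=\gth_{jj_0}'-\gth_{jj_0}$ coincides with the paper's $\phi_j:=\gth_{\ell j}-\gth_{\ell j}'$ (with $\ell=j_0$) by that same antisymmetry, so the two arguments are identical up to notation.
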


\begin{proof}
The condition for unitary equivalence is well known. It is included
in the theorem only for the purpose of comparison. We now prove 2.

First suppose that $\Phi$ and $\Psi$ are projectively unitarily 
equivalent, i.e., $w_j=c_jUv_j$. Then by (\ref{tripareinvariant}) 
their triple products are equal.

Conversely, suppose that $\Phi$ and $\Psi$ have the same triple products,
and their common frame graph is complete, i.e., all the triple 
products are nonzero.

It follows from (\ref{modulifromtriples}) that their Gramians have 
entries with equal moduli,
i.e., 
$$ |\inpro{v_j,v_k}|=|\inpro{w_j,w_k}|, \qquad \forall j,k. $$
Let $\gth_{jk}$ and $\gth_{jk}'$ be the angles of $\Phi$ and $\Psi$.
Since the triple products have the polar form
$$ T_{jk\ell} 
= \inpro{v_j,v_k}\inpro{v_k,v_\ell}\inpro{v_\ell,v_j}
= e^{i(\gth_{jk}+\gth_{k\ell}+\gth_{\ell j})}
|\inpro{v_j,v_k}\inpro{v_k,v_\ell}\inpro{v_\ell,v_j}|, $$
we obtain
$$ \gth_{jk}+\gth_{k\ell}+\gth_{\ell j} 
= \gth_{jk}'+\gth_{k\ell}'+\gth_{\ell j}'. $$
Fix $\ell$, and rearrange this, using 
$\gth_{k\ell}=-\gth_{\ell k}$ and
$\gth_{k\ell}'=-\gth_{\ell k}'$,
to get
$$ \gth_{jk}'
= \gth_{jk} +(\gth_{\ell j} -\gth_{\ell j}') 
+ (\gth_{k\ell} -\gth_{k\ell}')
= \gth_{jk} +(\gth_{\ell j} -\gth_{\ell j}') - (\gth_{\ell k} -\gth_{\ell k}')
= \gth_{jk} +\phi_j-\phi_k,
$$
where $\phi_j:= \gth_{\ell j} -\gth_{\ell j}'$, 
i.e., the angles of $\Phi$ and $\Psi$ are gauge equivalent.
Since the conditions of 
Lemma \ref{proequivlemma} hold, it follows that $\Phi$ and $\Psi$ are
projectively unitarily equivalent.
\end{proof}

The real case is closely connected
with the theory of {\it two--graphs} (cf.\ \cite{GR01})
as follows.

\begin{example} 
\label{projinRd}
(Equiangular lines in $\Rd$).
Suppose that $\Phi=(v_j)$ is a sequence of $n>d$ equiangular unit vectors 
(lines) in $\Rd$, i.e., there is an $\ga>0$ with
$$ \inpro{v_j,v_k}=\pm \ga, \qquad j\ne k.$$
Then the Gramian matrix has the form
$$ G_\Phi=\Gram(\Phi)=I+\ga S_\Phi, $$
where $S=S_\Phi$ is a {\bf Seidel matrix}, i.e.,
$S$ is symmetric, with zero diagonal, and off
diagonal entries $\pm1$. Moreover, each Seidel matrix is 
associated with a sequence of equiangular lines.
Each Seidel matrix $S$ is in turn associated with 
the graph $\gr(S)$ which has an edge between $j\ne k$ if
and only if $S_{jk}=-1$. Let $\cC$ be the diagonal matrices with
diagonal entries $\pm1$. Then the projective unitary equivalence
class of $\Phi$ is uniquely determined by all the 
possible Gram matrices of its members, i.e.,
$$ \cG := \{ CG_\Phi C^*:C\in\cC\}, $$
and hence all the possible Seidel matrices
$$ \cS := \{ CG_\Phi C^*:C\in\cC\}, $$
and in turn the corresponding graphs $\gr(\cS)$. 
The set of graphs $\gr(\cS)$ is called 
the {\bf switching class} of $\gr(S_\Phi)$, or a {\bf two--graph}.
Since the frame graph of $\Phi$ is complete, Theorem \ref{Chartheorem}
gives that projective unitary equivalence class of $\Phi$
(equivalently $\cG$, $\cS$ or $\gr(\cS)$) is in $1$--$1$ correspondence
with the triple products of $\Phi$. It suffices to consider only those
triple products with distinct indices, since if an index is repeated 
twice or thrice, then by (\ref{modulifromtriples}) the triple 
products are depend only on $\ga$.
In this way, the two--graph is in $1$--$1$ correspondence with the 
triple products
$$ \{ T_{jk\ell}=\pm \ga^3:\hbox{$j,k,\ell$ are distinct}\}. $$
Since these triple products take only two values,
which are independent of the ordering of the indices,
they can be described by giving the collection of the subsets $\{j,k,\ell\}$
where they take one of these values.
This association leads to the equivalent definition of a two--graph
as a set of (unordered) triples chosen from a finite
vertex set $X$, such that every unordered quadruple from $X$ contains
an even number of triples of the two--graph.
\end{example}

\begin{example} (Equiangular lines in $\Cd$)
If $\Phi$ is a sequence of $n$ equiangular unit vectors (lines) in
$\Cd$, with $C>0$, then up to projective unitary equivalence $\Phi$
is determined by its triple products. This result was given in
\cite{AFF09} for the special case $n=d^2$.
Such a configuration has $C={1\over\sqrt{d+1}}$, and is known
as a {\bf SIC} or {\bf SIC-POVM} 
({symmetric informationally complete positive operator valued measure}).
\end{example}

We now give an example to show that projective unitary equivalence 
is not always characterised by the triple products if the
frame graph is not complete. 
We observe that the $m$--products are closed under 
complex conjugation, i.e.,
\begin{equation}
\overline{\gD(v_{j_1},v_{j_2},\ldots,v_{j_m})}
= \gD( v_{j_m},\ldots,v_{j_2},v_{j_1}).
\end{equation}

\begin{example}
\label{cycleexample} 
($n$--cycle)
Let $(e_j)$ be the standard basis vectors in $\CC^n$.
Fix $|z|=1$, and let
$$ v_j := 
\begin{cases}
e_j + e_{j+1}, & 1\le j<n, \\
e_n+ z e_1, & j=n.
\end{cases} $$
Then the frame graph of $(v_j)$ is the $n$--cycle
$(v_1,\ldots,v_n)$, and so the only nonzero $m$--products 
for distinct vectors are
\begin{align}
& \gD(v_j)=\norm{v_j}^2=2, \qquad 1\le j \le n,  \\
& \gD(v_j,v_{j+1})=|\inpro{v_j,v_{j+1}}|^2=1, \quad 1\le j < n,\\
& \gD(v_1,v_2,\ldots,v_n)=z,
\end{align}
and their complex conjugates. Therefore different choices of $z$ 
give projectively inequivalent frames.
Thus, for $n>3$, the
vectors $(v_j)$ are not defined up to projective unitary 
equivalence by their triple products.
\end{example}

\section{Characterisation of projective unitary equivalence} 

We now show that a sequence of $n$ vectors is determined
up to projective unitary equivalence by its $m$--products
for $1\le m\le n$. This is done by constructing a sequence of 
vectors $(w_j)$ with given $m$--products 
$\gD(v_{j_1},\ldots,v_{j_m})$,
which amounts to finding all the possible Gram matrices 
$G=[\inpro{w_k,w_j}]$
with these $m$--products, because of the following. 

\begin{remark}
Given a Gram matrix $G$, 
there are many ways to 
construct a sequence of vectors $(v_j)_{j=1}^n$
with $G=[\inpro{v_k,v_j}]$, i.e., $G=V^*V$ where $V=[v_1,\ldots,v_n]$.
For example,
since the Gram matrix $G$ of a sequence of $n$ vectors
which span a vector space of dimension $d$ is positive semidefinite
of rank $d$, it is unitarily diagonalisable
$$ G = U^* \gL U, \qquad
\gL=\pmat{\gl_1&&\cr &\ddots\cr && \gl_n}, \quad
\gl_1,\ldots,\gl_d>0, \quad \gl_{d+1},\ldots,\gl_n =0, $$
and so we may take $V=[v_1,\ldots,v_n]=\gL^{1\over 2}U$.
This gives vectors $(v_j)$ in $\C^n$ which are zero in the last $n-d$ 
components, and so can be identified with vectors in $\Cd$.
Similarly, one could take a Cholesky decomposition $G=V^*V$.
In the special case 
when $G$ is an orthogonal projection matrix $P$, and one can take
$V=P$, since $G=P=P^*P$.
\end{remark}

The diagonal entries of the Gram matrices $G=[\inpro{w_k,w_j}]$ are given by the
$1$--products, and the moduli of its off diagonal entries
by the $2$--products with distinct arguments, i.e.,
\begin{equation}
\gD(v_j)=\norm{v_j}^2,
\qquad 
\gD(v_j,v_k)=|\inpro{v_j,v_k}|^2, \quad j\ne k. 
\label{12cycles}
\end{equation}
It therefore remains to choose arguments for the 
nonzero off diagonal entries of $G$, which are consistent
with the $m$--products for $m\ge 3$.
By choosing $C$ in (\ref{projGramcdn}), some of 
these can be taken to be arbitrary. Once this is done
to the full extent (spanning tree argument), we show
the remaining arguments are then given by the $m$--products (completing
cycles).

\begin{theorem}\label{nbargmanprojunieq} (Characterisation)
Two sequences $(v_j)$ and $(w_j)$ of $n$ 
vectors are projectively unitarily equivalent
if and only if their $m$--products are equal, i.e.,
$$ \gD(v_{j_1},v_{j_2},\ldots,v_{j_m})
=\gD(w_{j_1},w_{j_2},\ldots,w_{j_m}),
\qquad 1\le j_1,\ldots, j_m\le n,
\quad 1\le m \le n. $$
\end{theorem}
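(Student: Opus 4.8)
The plan is to reduce the statement to a "gauge-fixing" argument on the frame graph, using Lemma \ref{proequivlemma} as the endpoint. The forward direction is immediate from the invariance computation \eqref{tripareinvariant}, which generalises verbatim to all $m$: if $w_j = c_j U v_j$, the scalars $c_{j_i}\overline{c_{j_{i+1}}}$ telescope around the cycle $j_1\to j_2\to\cdots\to j_m\to j_1$ and the unitary $U$ cancels, so all $m$--products agree. The substance is the converse.

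Assume the $m$--products of $(v_j)$ and $(w_j)$ agree for $1\le m\le n$. First, by the $m=1,2$ cases \eqref{12cycles} the two Gramians have entries of equal modulus, and in particular the two sequences share a common frame graph $\Gamma$. Condition 1 of Lemma \ref{proequivlemma} is thus met, and it remains to produce $\phi_j\in\TT$ with $\gth_{jk}' = \gth_{jk} + \phi_j - \phi_k$ for every edge $jk$ of $\Gamma$ (edges not in $\Gamma$ carry no angle). Here is where the graph structure enters. I would first reduce to the case where $\Gamma$ is connected: on each connected component the argument runs independently, and scalars chosen on different components do not interact. On a connected component, pick a spanning tree $T$. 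Walk the tree from a root, and for each non-root vertex $j$ reached along the tree-edge $ij$ set $\phi_j$ recursively so that $\gth_{ij}' = \gth_{ij} + \phi_i - \phi_j$ forces the value of $\phi_j$ once $\phi_i$ is known (root gets $\phi = 0$). This defines all the $\phi_j$ and makes the gauge equation hold on every tree edge by construction. The real work is to check it also holds on the non-tree edges, and this is exactly where the higher $m$--products are needed.

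So let $jk$ be a non-tree edge of $\Gamma$. The tree paths from the root to $j$ and to $k$, together with this edge, determine a cycle $j = j_1, j_2, \ldots, j_r = k, j$ in $\Gamma$ (after cancelling the common initial segment, a genuine cycle). The corresponding $m$--product with $m = r$ (or $m=r+1$ going around and back) has polar form $e^{i(\gth_{j_1j_2} + \gth_{j_2j_3} + \cdots + \gth_{j_rj_1})}$ times a modulus; equality of this $m$--product for $(v_j)$ and $(w_j)$, combined with equality of moduli, gives
$$ \gth_{j_1j_2} + \cdots + \gth_{j_rj_1} = \gth_{j_1j_2}' + \cdots + \gth_{j_rj_1}' \pmod{2\pi}. $$
Now substitute $\gth_{j_ij_{i+1}}' = \gth_{j_ij_{i+1}} + \phi_{j_i} - \phi_{j_{i+1}}$ on every tree edge of this cycle (all edges except the one non-tree edge $jk$); the telescoping $\phi$-terms around the cycle cancel completely, and what survives is precisely $\gth_{jk}' = \gth_{jk} + \phi_j - \phi_k$ on the remaining edge. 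Hence condition 2 of Lemma \ref{proequivlemma} holds for all edges, and the lemma yields projective unitary equivalence.

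The one point requiring care — and the step I expect to be the main obstacle — is making the cycle/telescoping bookkeeping airtight: one must fix orientation conventions ($\gth_{jk} = -\gth_{kj}$), verify that the fundamental cycle associated to a non-tree edge really is a closed walk whose edge-angle sum is captured by some $m$--product with $m\le n$ (it is, since a cycle in $\Gamma$ visits at most $n$ vertices), and confirm that the recursion on the spanning tree is well-defined independent of traversal order (it is, because $T$ is a tree, so there is a unique path to each vertex). A clean way to phrase all of this is to observe that the assignment $jk \mapsto \gth_{jk}' - \gth_{jk}$ defines a $\TT$-valued function on the edges of $\Gamma$ whose sum around every cycle vanishes (equivalently, it lies in the cut space, the orthogonal complement of the cycle space over $\TT$); such a function is necessarily a coboundary, i.e., of the form $\phi_j - \phi_k$, which is the assertion to be proved. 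This is the natural conceptual formulation, and I expect the authors' actual argument to make this "spanning tree + fundamental cycles" structure explicit, likely anticipating the refinement in Theorem \ref{nbargmanprojunieqII} that only $m$--products indexed by (a generating set of) cycles are needed.
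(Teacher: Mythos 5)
Your proof is correct and follows essentially the same route as the paper's: equal moduli from the $1$-- and $2$--products, a spanning tree on each connected component to fix the gauge, and the $m$--products of the fundamental cycles to pin down the remaining arguments (the paper phrases this as reconstructing the Gram matrix $C\Gram(\Phi)C^*$ rather than verifying the gauge condition of Lemma \ref{proequivlemma}, but the mechanism is identical). Your closing remark about anticipating Theorem \ref{nbargmanprojunieqII} is also on target.
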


\begin{proof} It suffices to find a Gram matrix 
$$G=[\inpro{w_k,w_j}]=C\Gram(\Phi)C^*$$ 
by using only the $m$--products of $\Phi=(v_j)$.
By (\ref{12cycles}), we know the modulus of each entry of $G$,
and in particular the frame graph of $\Phi$. 
We therefore need only determine the arguments of the
(nonzero) inner products, which correspond to edges of the
frame graph. This we do on each connected component $\gG$
of the frame graph of $\Phi$. 

{\it Spanning tree argument.}
Find a spanning tree $\cT$ of $\gG$ with root vertex $r$. 
By working out from the root $r$, we can multiply the vertices 
$v\in \gG\setminus\{r\}$ 
by unit scalars so that the arguments of the inner products 
corresponding to the edges of $\gG$ take arbitrarily assigned values.

{\it Completing cycles.} The only entries of the Gram matrix $G$ 
which are not yet defined are those given by the edges of $\gG$
which are not in $\cT$. Since
$\cT$ is a spanning tree, adding each such edge to $\cT$ gives an
$m$--cycle. The corresponding nonzero $m$--product has
all inner products already determined,
except the one corresponding to the added edge, 
which is therefore uniquely determined by the $m$--product.
\end{proof}

We now illustrate Theorem \ref{nbargmanprojunieq},
by constructing all  the
possible Gram matrices $G$ for a sequence of vectors
$(w_j)$ which is 
projectively unitarily equivalent to a given sequence $\Phi=(v_j)$,
by using only the $m$--products of $\Phi$.

\begin{example}
\label{example1}
Let $\Phi=(e_j)$ be an orthonormal basis for $\CC^3$,
which has Gram matrix 
$$ \Gram(\Phi)=\pmat{1&0&0\cr0&1&0\cr0 &0&1}. 
$$
Here the frame graph is totally disconnected (see Fig.\ 1), and so each possible
$G$ is determined by the $1$--products and $2$--products
using (\ref{12cycles}), i.e.,
$$ \inpro{w_j,w_j}=\inpro{v_j,v_j}=1 , \qquad
|\inpro{w_k,w_j}|^2=|\inpro{v_k,v_j}|^2=0, \quad j\ne k 
\Implies
\inpro{w_k,w_j}= \gd_{jk}. $$
Thus there is a {unique} Gram matrix $G$ corresponding 
to the $1$--products and $2$--products.
Alternatively, by (\ref{projGramcdn}), 
one has that all $G$ are given by $C^*\Gram(\Phi)C=\Gram(\Phi)$.
\end{example}

Now we give an example where
the spanning tree and cycle completing arguments are not trivial.

\begin{example}
\label{example2}
Let $\Phi=(v_j)$ be three equally spaced unit vectors in $\RR^2$,
viewed as vectors in $\CC^2$.
These have Gram matrix 
$$ \Gram(\Phi)= \pmat{1&{1\over 2}&{1\over 2}\cr
{1\over2}&1&{1\over 2}\cr
{1\over2}&{1\over2}&1}, $$
and the frame graph is complete  (see Fig.\ 1). A spanning 
tree with root $v_1$ is given by the path $v_1,v_2,v_3$,
working out from the root $v_1=w_1$, we may 
scale $v_2$ then $v_3$ to $w_j=c_j v_j$, so that 
the arguments of $\inpro{w_1,w_2}$ and $\inpro{w_2,w_3}$
are arbitrary, say $a$ and $b$, $|a|=|b|=1$. The only inner
product which is not yet determined is $\inpro{w_1,w_3}$, 
which is given by completing the $3$--cycle $v_1,v_2,v_3,v_1$, i.e.,
$$ \gD(w_1,w_2,w_3)= \gD(v_1,v_2,v_3)
\Implies \Bigl({1\over 2}a\Bigr) \Bigl({1\over 2}b\Bigr) 
\Bigl({1\over 2}\inpro{w_3,w_1}\Bigr) = \Bigl({1\over2}\Bigr)^3
\Implies \inpro{w_1,w_3}=ab. $$
Thus all the Gram matrices $G$ of vectors $(w_j)$ which are 
projectively unitarily equivalent to $\Phi=(v_j)$ are
given by 
$$ G = \pmat{1&{1\over 2}\overline{a} & {1\over 2} \overline{a}\overline{b} \cr
{1\over2}{a} & 1 &{1\over 2}\overline{b}\cr
{1\over2}{a}b & {1\over2} b &1},
\qquad |a|=|b|=1. $$
This can be checked using (\ref{projGramcdn})
$$ G= C^*\Gram(\Phi)C=
\pmat{1&{1\over 2}\overline{c_1}c_2 & {1\over 2}\overline{c_1}c_3 \cr
{1\over2}\overline{c_2}c_1 & 1 &{1\over 2}\overline{c_2}c_3\cr
{1\over2}\overline{c_3}c_1 & {1\over2}\overline{c_3}c_2 &1}. $$
\end{example}

\begin{figure}[ht]
\centering
\begin{subfigure}[b]{0.2\textwidth}
\begin{tikzpicture}[->,>=stealth',shorten >=1pt,auto,node distance=1.5cm,
  thick,main node/.style={circle,fill=white!5,draw,font=\sffamily\Large\bfseries}]

  \node[main node] (1) {};
  \node[main node] (2) [below left of=1] {};
  \node[main node] (3) [below right of=1] {};
\end{tikzpicture}
\end{subfigure}
\qquad\qquad
\begin{subfigure}[b]{0.2\textwidth}
\begin{tikzpicture}[>=stealth',shorten >=1pt,auto,node distance=1.5cm,
  thick,main node/.style={circle,fill=white!5,draw,font=\sffamily\Large\bfseries}]

  \node[main node] (1) {};
  \node[main node] (2) [below left of=1] {};
  \node[main node] (3) [below right of=1] {};

  \path[every node/.style={font=\sffamily\small}]
    (1) edge node [left] {} (2)
    (2) edge node [left] {} (3)
    (3) edge node [right] {} (1);
\end{tikzpicture}
\end{subfigure}
\caption{The frame graph of an orthonormal basis for $\C^3$ 
(Example \ref{example1}), and the frame graph for three equiangular vectors
in $\C^2$ (Example \ref{example2}).  }
\end{figure}
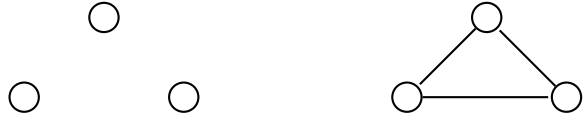

\begin{example}
\label{examplemub} 
Let $\Phi=(v_j)$ be the ``two mutually unbiased bases'' for $\CC^2$ given by
$$ \Phi=\left\{ \pmat{1\cr0},\pmat{0\cr1},
\pmat{{1\over\sqrt{2}}\cr{1\over\sqrt{2}}},
\pmat{{1\over\sqrt{2}}\cr-{1\over\sqrt{2}}} \right\},
\qquad \Gram(\Phi)=\pmat{1&0&{1\over\sqrt{2}}&{1\over\sqrt{2}}\cr
0&1&{1\over\sqrt{2}}&-{1\over\sqrt{2}}\cr
{1\over\sqrt{2}}&{1\over\sqrt{2}}&1&0\cr
{1\over\sqrt{2}}&-{1\over\sqrt{2}}&0&1\cr }
. $$
This $\Phi$ has frame graph the $4$--cycle
$v_1,v_3,v_2,v_4,v_1$, and hence no nonzero triple products with distinct indices.
A spanning tree with root $v_1$ is given by the path
$v_1,v_3,v_2,v_4$. Fix $v_1=w_1$, and then scale
in order $v_3$, $v_2$, $v_4$ to $w_j=c_jv_j$, so
that 
$$\inpro{w_1,w_3}={a\over\sqrt{2}}, \qquad
\inpro{w_3,w_2}={\overline{b}\over\sqrt{2}}, \qquad 
\inpro{w_2,w_4}={c\over\sqrt{2}}. $$
Then $\inpro{w_4,w_1}={1\over\sqrt{2}}\overline{z}$ is determined by completing
the $4$--cycle, i.e.,
$$ \inpro{w_1,w_3}\inpro{w_3,w_2}\inpro{w_2,w_4}\inpro{w_4,w_1} 
= \inpro{v_1,v_3}\inpro{v_3,v_2}\inpro{v_2,v_4}  \inpro{v_4,v_1}
\Implies 
a\overline{b}c\overline{z}=-1 $$
Thus the Gram matrices which match all the $m$--products of $\Phi$ 
have the form
$$ G = \pmat{1&0&{\overline{a}\over\sqrt{2}}&{\overline{z}\over\sqrt{2}}\cr
0&1&{\overline{b}\over\sqrt{2}}&{\overline{c}\over\sqrt{2}}\cr
{{a}\over\sqrt{2}}&{{b}\over\sqrt{2}}&1&0\cr
{{z}\over\sqrt{2}}&{{c}\over\sqrt{2}}&0&1\cr }, \qquad
|a|=|b|=|c|=1, \quad z:= -{ac\over b}.  $$ 
In this example, $\Phi$ is in fact determined up to projective unitary 
equivalence by its $1$--products and $2$--products, since Sylvester's
criterion for the $G$ above to be positive semidefinite gives
$$ \det(G)
=-{1\over 4} {(bz+ac)^2\over abcz}
=-{1\over 4} \left|{bz\over ac}+1\right|^2
\ge0
\Implies {bz\over ac}+1=0
\Implies z=-{ac\over b}. $$
By way of contrast, the $\Phi$ with frame graph a $4$--cycle in
Example \ref{cycleexample} is not determined up to  projective unitary
equivalence by its $1$--products and $2$--products.
\end{example}

\section{Reconstruction from the $m$--products}

It is apparent from the proof of Theorem \ref{nbargmanprojunieq}  
that only 
a small subset of the $m$--products is required to 
determine a sequence of vectors $\Phi$ up to projective
unitary equivalence. 
We call a subset of the $m$--products (or the corresponding indices)
a {\bf determining set} for the $m$--products if all $m$--products
can be determined from them.

\begin{corollary}
Let $\gG$ be the frame graph of a sequence of $n$ vectors $\Phi$
(this is determined by the $2$--products).
For each connected component $\gG_j$ of $\gG$, let $\cT_j$ be a spanning tree.
Then $\Phi$ is determined up to projective unitary equivalence
by the following $m$--products
\begin{enumerate}[(i)]
\item The $2$--products
\item The $m$--products, $3\le m\le n$,
used to obtain $\gG_j$ from $\cT_j$ 
by completing $m$--cycles (these have indices in $\gG_j$), 
 as detailed in 
the proof of Theorem \ref{nbargmanprojunieq}.
\end{enumerate}
In particular, if $M$ is the number of edges of $\gG$
which are not in any $\cT_j$, 
then it is sufficient to know all of 
the $2$--products, and $M$ of the $m$--products, $3\le m\le n$.
\end{corollary}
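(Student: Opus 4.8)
The plan is to read the corollary as essentially a restatement of what the proof of Theorem \ref{nbargmanprojunieq} already establishes, organised so that one can literally count how many $m$--products are needed. First I would note that by \eqref{12cycles} the $2$--products determine both the diagonal entries $\norm{w_j}^2$ of any candidate Gram matrix and the moduli $|\inpro{w_j,w_k}|$ of all off--diagonal entries; in particular the set of nonzero inner products, hence the frame graph $\gG$, is recovered from the $2$--products alone. So item (i) together with the graph--theoretic data is available at the outset, and the only freedom left is in the \emph{arguments} $\gth_{jk}$ of the nonzero inner products, i.e. on the edges of $\gG$.

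Next I would run the argument component by component. Fix a connected component $\gG_j$ and its spanning tree $\cT_j$. The spanning tree argument in the proof of Theorem \ref{nbargmanprojunieq} shows that, after choosing the diagonal scaling matrix $C$ appropriately, the arguments of the inner products corresponding to the edges of $\cT_j$ may be set to arbitrary prescribed values; equivalently, $\Phi$ restricted to $\gG_j$ is projectively unitarily equivalent to one where those $|\cT_j|$ arguments are whatever we like, so they carry no invariant information. Every remaining edge $e$ of $\gG_j$ lies outside $\cT_j$, and adding $e$ to $\cT_j$ produces a unique cycle, of some length $m$ with $3\le m\le n$; the associated $m$--product is nonzero (all its edges are in $\gG_j$) and all but one of its factors — the one on $e$ — has already been pinned down, so that single $m$--product determines the argument on $e$. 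Doing this for every non--tree edge fills in the entire Gram matrix $G = C\,\Gram(\Phi)\,C^*$, and by \eqref{projGramcdn} this $G$ is the Gramian of a sequence $(w_j)$ projectively unitarily equivalent to $\Phi$. By Theorem \ref{nbargmanprojunieq} any two sequences with the same $m$--products are projectively unitarily equivalent, so the data in (i) and (ii) determines $\Phi$ up to projective unitary equivalence, which is the assertion.

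For the counting statement I would simply observe that the number of $m$--products used in step (ii), over all components, is exactly the number of edges of $\gG$ lying in no $\cT_j$; writing $\gG$ as having edge set of size $|E|$ and $p$ connected components on $n$ vertices, the spanning forest $\bigcup_j \cT_j$ has $n-p$ edges, so $M = |E| - (n-p)$, and one uses all the $2$--products plus these $M$ higher $m$--products. (It is worth recording that $M$ is the dimension of the cycle space of $\gG$, which foreshadows the cycle--space viewpoint promised later in the paper.)

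The main obstacle is not any single hard step — everything of substance is already in the proof of Theorem \ref{nbargmanprojunieq} — but rather making sure the two halves of the statement are stated with the right logical force: namely that (i) genuinely suffices to recover $\gG$ (so that the spanning trees $\cT_j$ are themselves determined, up to the irrelevant choice of which spanning tree), and that the cycle produced by a non--tree edge really does use only indices inside the single component $\gG_j$, so that the relevant $m$--product is one of those guaranteed nonzero by the frame graph. Both points are immediate from the definitions, so the ``proof'' is essentially a careful re--reading of the previous argument together with the elementary edge count $M = |E| - n + p$.
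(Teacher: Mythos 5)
Your argument is correct and is essentially the paper's own: the paper gives no separate proof of this corollary, treating it as an immediate consequence of the spanning--tree and cycle--completion steps in the proof of Theorem \ref{nbargmanprojunieq}, which is exactly what you have spelled out (together with the correct edge count $M=|E|-(n-p)$, the dimension of the cycle space). No gaps.
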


In other words, all possible $m$--products can be determined from those of
(i) and (ii),
which therefore are a determining set.
As indicated by Example \ref{examplemub}, the $m$--products of (ii)
may not be a minimal such subset, though it is a minimal subset
from which the $m$--products can be determined using only 
the proof of Theorem \ref{nbargmanprojunieq}.

\begin{example}
\label{TpTsExample}
Let $\Phi=(v_j)$ be four equiangular vectors with $C>0$. 
The frame graph of $\Phi$ is complete,
and $M=6-3=3$.
Spanning trees (see Fig.\ 2) include
\begin{align*}
\cT_p &:= \hbox{the path $v_1,v_2,v_3,v_4$},  \\
\cT_s &:= \hbox{the star graph internal vertex $v_1$ and leaves $v_2,v_3,v_4$}.
\end{align*}
For $\cT_p$
we can complete either the
$4$--cycle $v_1,v_2,v_3,v_4,v_1$ followed by
any one of the four $3$--cycles (which also completes 
another $3$--cycle) and then one of the remaining two $3$--cycles,
so, e.g., a determining set is given by the $2$--products, and
\begin{equation}
\label{Tpexample}
\gD(v_1,v_2,v_3,v_4), \quad
\gD(v_1,v_2,v_3), \quad 
\gD(v_1,v_2,v_4). 
\end{equation}
For $\cT_s$, adding any edge completes a $3$--cycle
containing $v_1$, there are then two edges which can be
added to complete a $3$--cycle, after which adding the other edge
completes the remaining two $3$--cycles,
so, e.g., a determining set is given by the $2$--products, and
\begin{equation}
\label{Tsexample}
\gD(v_1,v_2,v_3), \quad
\gD(v_1,v_2,v_4), \quad 
\gD(v_1,v_3,v_4). 
\end{equation}
\end{example}

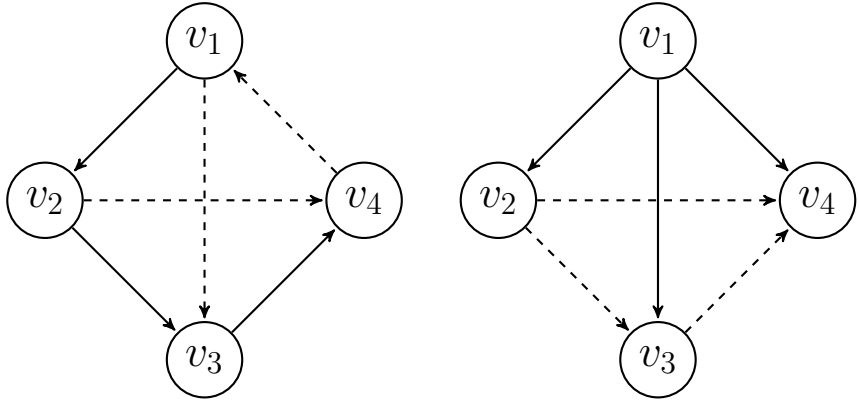
\begin{figure}[ht]
\centering
\begin{subfigure}[b]{0.34\textwidth}
\begin{tikzpicture}[->,>=stealth',shorten >=1pt,auto,node distance=3cm,
  thick,main node/.style={circle,fill=white!20,draw,font=\sffamily\Large\bfseries}]

  \node[main node] (1) {$v_1$};
  \node[main node] (2) [below left of=1] {$v_2$};
  \node[main node] (3) [below right of=2] {$v_3$};
  \node[main node] (4) [below right of=1] {$v_4$};

  \path[every node/.style={font=\sffamily\small}]
    (1) edge node [left] {} (2)
    (2) edge node [left] {} (3)
    (3) edge node [right] {} (4);

  \path[every node/.style={font=\sffamily\small}, dashed]
    (4) edge node [right] {} (1)
    (1) edge node [left, pos=0.3] {} (3)
    (2) edge node [below, pos=0.7] {} (4);
\end{tikzpicture}
\end{subfigure}
~
\begin{subfigure}[b]{0.34\textwidth}
 
\begin{tikzpicture}[->,>=stealth',shorten >=1pt,auto,node distance=3cm,
  thick,main node/.style={circle,fill=white!20,draw,font=\sffamily\Large\bfseries}]

  \node[main node] (1) {$v_1$};
  \node[main node] (2) [below left of=1] {$v_2$};
  \node[main node] (3) [below right of=2] {$v_3$};
  \node[main node] (4) [below right of=1] {$v_4$};

  \path[every node/.style={font=\sffamily\small}]
    (1) edge node [left] {} (2)
    (1) edge node [left,pos=0.3] {} (3)
    (1) edge node [right] {} (4);

  \path[every node/.style={font=\sffamily\small}, dashed]
    (2) edge node [left] {} (3)
    (3) edge node [right] {} (4)
    (2) edge node [below, pos=0.7] {} (4);
\end{tikzpicture}
\end{subfigure}

\caption{The spanning trees $\cT_p$ and $\cT_s$ (and cycle completions)
of Example \ref{TpTsExample}.  }
\end{figure}

We observe that the $4$--product of (\ref{Tpexample}) can be 
``decomposed'' into smaller $m$--products, e.g.,
\begin{equation}
\label{decompex}
\gD(v_1,v_2,v_3,v_4) = 
{\gD(v_1,v_2,v_3)\gD(v_1,v_3,v_4)\over\gD(v_1,v_3)},
\end{equation}
and so $\gD(v_1,v_2,v_3,v_4)$ can be replaced by 
the $3$--cycle $\gD(v_1,v_3,v_4)$, which gives the determining
set of (\ref{Tsexample}). 

The $m$--products of (ii) can be taken to be those
corresponding to the {\bf fundamental cycles},
i.e., the unique cycle completed by adding an edge to $\gG_j$.
These fundamental cycles form a basis for the {\bf cycle space},
i.e., the $\ZZ_2$--formal combinations of cycles.
Using this terminology, we can generalise Theorem \ref{nbargmanprojunieqII}.

\begin{theorem}\label{nbargmanprojunieqII} (Characterisation II)
Two finite sequences $(v_j)_{j\in J}$ and $(w_j)_{j\in J}$ of
vectors are projectively unitarily equivalent
if and only if their $2$--products are equal
and their $m$--products corresponding to a basis
for the cycle space of their frame graph are equal.
\end{theorem}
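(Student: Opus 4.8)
The plan is to reduce Theorem \ref{nbargmanprojunieqII} to Theorem \ref{nbargmanprojunieq} by showing that the $m$--products indexed by a cycle basis already determine \emph{all} $m$--products. The forward direction is immediate: if $(v_j)$ and $(w_j)$ are projectively unitarily equivalent, then by \eqref{tripareinvariant} (and its obvious analogue for general $m$) \emph{every} $m$--product agrees, in particular those corresponding to a cycle basis and the $2$--products. So the work is entirely in the converse.

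For the converse, suppose the $2$--products and the $m$--products corresponding to a fixed basis $\{Z_1,\ldots,Z_M\}$ of the cycle space of the common frame graph $\gG$ agree for $\Phi$ and $\Psi$. First I would record, as in the proof of Theorem \ref{nbargmanprojunieq}, that the $2$--products give the moduli $|\inpro{v_j,v_k}|^2$ and hence the frame graph itself, so $\Phi$ and $\Psi$ do share a frame graph and it makes sense to speak of \emph{the} cycle space. Working on each connected component, pick a spanning tree $\cT$; the fundamental cycles $C_1,\ldots,C_M$ attached to the non-tree edges form a distinguished cycle basis. The key step is the following claim: for \emph{any} cycle basis $\{Z_i\}$, the full list of $m$--products of a sequence with prescribed frame graph and prescribed moduli is determined by the $m$--products over $\{Z_i\}$, provided those are compatible with the moduli. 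I would prove this claim by the ``completing cycles'' argument already used: express each fundamental cycle $C_i$ as a $\ZZ_2$--combination of the $Z_\ell$, observe that the corresponding phase relation (the argument of a cycle's $m$--product is the sum of the edge-angles $\gth$ around it) is the same $\ZZ_2$—indeed $\ZZ$—linear combination modulo $2\pi$, so the cycle-angle $\Theta(C_i):=\sum_{\text{edges of }C_i}\gth_{\bullet\bullet}$ is an integer combination of the $\Theta(Z_\ell)$; hence knowing $\gD$ on the $Z_\ell$ (together with the known moduli) pins down $\gD(C_i)$ for every fundamental cycle, and then the spanning-tree/cycle-completion step of Theorem \ref{nbargmanprojunieq} recovers a valid Gram matrix $G=C\,\Gram(\Phi)\,C^*$. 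Applying this to both $\Phi$ and $\Psi$ with the \emph{same} input data produces Gram matrices $G_\Phi$ and $G_\Psi$ whose arguments were assigned identically along $\cT$ and then forced identically around the fundamental cycles, so $G_\Phi=G_\Psi$; by \eqref{projGramcdn} this is exactly projective unitary equivalence.

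The main obstacle I anticipate is making the ``cycle angle is additive over the $\ZZ_2$ cycle space'' assertion fully rigorous, since $m$--products are genuinely \emph{multiplicative} objects living on $\TT$, not additive, and the decomposition of a product like $\gD(v_1,v_2,v_3,v_4)$ into shorter products (as in \eqref{decompex}) introduces the moduli of the ``chord'' edges in the denominator. Concretely, when $C_i = Z_{\ell_1}\oplus\cdots\oplus Z_{\ell_r}$ in the cycle space, the edges common to an even number of the $Z_{\ell}$ cancel in the $\ZZ_2$ sum but their \emph{angles} cancel in $\Theta$ only because $\gth_{jk}=-\gth_{kj}$, while their \emph{moduli} do not cancel in the naive product of the $\gD(Z_\ell)$; one must divide by $|\inpro{v_j,v_k}|^2$ for each such cancelled edge (a quantity known from the $2$--products), exactly as in \eqref{decompex}. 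So the careful statement is: $\gD(C_i)$ equals $\prod_\ell \gD(Z_\ell)$ divided by $\prod_{e}|\inpro{\cdot,\cdot}|^{2}$ over the cancelled edges $e$, all factors of which are available from the hypotheses. Once this bookkeeping is isolated as a lemma about the boundary map of $\gG$ over $\ZZ_2$, the rest is the spanning-tree argument verbatim from Theorem \ref{nbargmanprojunieq}, and I would present it briefly rather than re-deriving it.
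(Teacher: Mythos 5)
Your proposal is correct and follows essentially the same route as the paper: the paper's (much terser) proof simply observes that every cycle in the frame graph is a combination of basis cycles, so every $m$--product is recoverable from the basis $m$--products together with the $2$--products via decompositions of the form \eqref{decompex}, after which Theorem \ref{nbargmanprojunieq} applies. The bookkeeping you single out --- that edges cancelled in the symmetric difference contribute their squared moduli (known from the $2$--products) while their angles cancel by the antisymmetry $\gth_{jk}=-\gth_{kj}$ --- is precisely the content the paper leaves implicit in \eqref{decompex}.
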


\begin{proof} All cycles in the frame graph can be calculated 
from those in basis, and hence all $m$--products can be 
calculated from those corresponding to a basis, as
indicated in (\ref{decompex}). 
\end{proof}

\begin{example}
The fundamental cycles corresponding to the trees 
$\cT_p$ and $\cT_s$ of Example \ref{TpTsExample} give
the following $m$--products
\begin{equation}
\gD(v_1,v_2,v_3,v_4), \quad
\gD(v_1,v_2,v_3), \quad 
\gD(v_2,v_3,v_4),
\end{equation}
\begin{equation}
\gD(v_1,v_2,v_3), \quad
\gD(v_1,v_2,v_4), \quad 
\gD(v_1,v_3,v_4), 
\end{equation}
respectively.
\end{example}

We can now generalise Theorem \ref{Chartheorem}.

\begin{theorem}
\label{3-cyclechar}
Let $\Phi=(v_j)$ be finite sequence of vectors.
Then $\Phi$ is determined up to projective unitary equivalence
by its $3$--products if the cycle space of its frame graph is
spanned by $3$--cycles (and so there is a basis of $3$--cycles).
\end{theorem}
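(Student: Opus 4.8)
The plan is to obtain this as an essentially immediate consequence of Theorem \ref{nbargmanprojunieqII}. Let $\Psi=(w_j)$ be another finite sequence of vectors having the same $3$--products as $\Phi=(v_j)$; I must show that $\Phi$ and $\Psi$ are projectively unitarily equivalent. (The converse implication, that projectively unitarily equivalent sequences have equal $3$--products, is already contained in \eqref{tripareinvariant}.)

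The first step is to observe that the $3$--products already carry the $1$-- and $2$--products with them: by \eqref{modulifromtriples} one has $\gD(v_j)=\norm{v_j}^2=T_{jjj}^{1/3}$ and, whenever $v_j\ne0$, $\gD(v_j,v_k)=|\inpro{v_j,v_k}|^2=T_{jjk}/T_{jjj}^{1/3}$ (the remaining case $v_j=v_k=0$ gives $\gD(v_j,v_k)=0$ directly, and if exactly one of $v_j,v_k$ vanishes one uses the other diagonal entry symmetrically). Hence equality of the $3$--products of $\Phi$ and $\Psi$ forces equality of their $2$--products, and in particular $\Phi$ and $\Psi$ have a common frame graph $\gG$.

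The second step is the choice of cycle basis. By hypothesis the cycle space of $\gG$ is spanned by $3$--cycles; since this space is a finite--dimensional $\ZZ_2$--vector space, a spanning family contains a basis, so there is a basis $B$ of the cycle space all of whose members are $3$--cycles. The $m$--products of $\Phi$ (respectively of $\Psi$) indexed by the cycles in $B$ are then $3$--products, hence equal. So $\Phi$ and $\Psi$ have equal $2$--products and equal $m$--products along a basis of the cycle space of their common frame graph, and Theorem \ref{nbargmanprojunieqII} applies to give that $\Phi$ and $\Psi$ are projectively unitarily equivalent.

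There is no real obstacle here, since all the substantive work has been absorbed into Theorem \ref{nbargmanprojunieqII}: the proof reduces to the two bookkeeping points above, namely that a single triple product determines the two $2$--products sitting inside it (so that the hypothesis ``equal $3$--products'' actually supplies \emph{both} hypotheses needed to invoke Theorem \ref{nbargmanprojunieqII}), and that a spanning set of $3$--cycles for the cycle space may be thinned to a basis of $3$--cycles, which is exactly the parenthetical remark in the statement. I would also record that this recovers and generalises Theorem \ref{Chartheorem}: a complete graph on at least three vertices has its cycle space spanned by $3$--cycles, as does, more generally, any chordal graph, so the theorem in particular covers the chordal frame graphs promised in the introduction.
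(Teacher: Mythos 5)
Your proof is correct and follows exactly the route the paper intends: Theorem \ref{3-cyclechar} is stated there as an immediate corollary of Theorem \ref{nbargmanprojunieqII}, obtained by thinning a spanning set of $3$--cycles to a basis of the cycle space. Your additional observation that the $3$--products already determine the $1$-- and $2$--products via \eqref{modulifromtriples} is a worthwhile bookkeeping point that the paper leaves implicit.
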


\begin{example} (Chordal graphs)
A graph is said to be {\bf chordal}  (or {\bf triangulated})
if each of its cycles of four or more vertices has a chord,
and so the cycle space is spanned by the $3$--cycles.
Thus, if the frame graph of $\Phi$ is chordal,
as is the case for equiangular lines, then $\Phi$
is determined by its triple products. 
The extreme cases are a totally disconnected graph (orthogonal 
bases) where there are no cycles, and the complete graph where all
subsets of three vectors lie on a $3$-cycle.
\end{example}

We now give an example (Theorem \ref{MUBtheorem}) where 
the cycle space of the frame graph has a basis of $3$--cycles,
but is not chordal.  

A family of orthonormal bases $\cB_1,\cB_2,\ldots,\cB_k$ 
for $\C^d$ is said to be {\bf mutually unbiased} if
$$ |\inpro{v,w}|^2 = \frac{1}{d}, \qquad v\in B_r,\quad w \in B_j. $$
We call $\cB_1,\ldots,\cB_k$ a sequence of $k$ {\bf MUBs} 
({\bf mutually unbiased bases}).
The frame graph of two or more MUBs ($d>1$) is not chordal, 
because there is a $4$--cycle $(v_1,w_1,v_2,w_2)$, 
$v_1,v_2\in\cB_r$, $w_1,w_2\in\cB_s$ not containing a chord.

We now show for three or more MUBs the cycle space
of the frame graph is spanned by the $3$--cycles. 
This is not case for two MUBs
(cf.\ Example \ref{examplemub}).

\begin{theorem}
\label{MUBtheorem} (MUBs) Let $\Phi$ consist of three or more MUBs in $\Cd$.
Then $\Phi$ is determined up to projective unitary equivalence 
by its $3$--products.
\end{theorem}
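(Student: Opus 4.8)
The plan is to apply Theorem \ref{3-cyclechar}: it suffices to show that the cycle space of the frame graph of $\Phi$ is spanned by $3$--cycles. First I would identify the frame graph explicitly. Since each basis $\cB_r$ is orthonormal, there is no edge between vectors of the same basis, while any two vectors from different bases have inner product of modulus $1/\sqrt d\ne 0$, hence are joined by an edge. So, if $\Phi$ consists of $k\ge 3$ MUBs, its frame graph is the complete multipartite graph $K_{d,\ldots,d}$ on $k$ parts (the bases), each of size $d$. Because $k\ge 3$, this graph contains triangles (pick one vertex from each of three distinct parts).

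Next I would show, by induction on the length $\ell$ of a cycle, that every cycle in $K_{d,\ldots,d}$ is a $\ZZ_2$--sum of triangles; this gives that the triangles span the cycle space. For $\ell=3$ there is nothing to prove. For $\ell\ge 4$, if the cycle $C=(u_1,\ldots,u_\ell)$ has a chord $u_iu_j$, then $C$ is the $\ZZ_2$--sum of the two strictly shorter cycles obtained by closing each arc of $C$ with that chord, and the inductive hypothesis applies to both. So the crux is the chordless case. Here I would first observe that a chordless cycle in $K_{d,\ldots,d}$ must have length exactly $4$: in a chordless cycle every pair of non--consecutive vertices lies in a common part (otherwise that pair would be an edge, i.e.\ a chord); if $\ell\ge5$ this forces the consecutive vertices $u_3,u_4$ both into the part of $u_1$, contradicting that consecutive vertices of the cycle lie in different parts.

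It then remains to express a chordless $4$--cycle as a sum of triangles, and this is the step that genuinely uses that there are three or more bases. A chordless $4$--cycle has the form $(v_1,w_1,v_2,w_2)$ with $v_1,v_2\in\cB_r$ and $w_1,w_2\in\cB_s$ for two bases $r\ne s$; choose any vector $x$ from a third basis. Then the four triangles $(v_1,w_1,x)$, $(w_1,v_2,x)$, $(v_2,w_2,x)$, $(w_2,v_1,x)$ are genuine triangles of $K_{d,\ldots,d}$, and in their $\ZZ_2$--sum every edge through $x$ occurs exactly twice and cancels, leaving precisely the $4$--cycle $(v_1,w_1,v_2,w_2)$. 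This completes the induction, so the cycle space of the frame graph is spanned by $3$--cycles, and Theorem \ref{3-cyclechar} yields the conclusion.

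I expect the only real subtlety to be the chordless--cycle analysis — pinning down that the chordless cycles of $K_{d,\ldots,d}$ have length $4$, and spotting the four--triangle decomposition of such a cycle — together with recording why the hypothesis $k\ge 3$ is essential: it is needed both for triangles to exist at all and for the third basis used in that decomposition, which is exactly what fails for two MUBs (cf.\ Example \ref{examplemub}).
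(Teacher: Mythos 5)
Your proof is correct, but it reaches the key combinatorial fact --- that the cycle space of the complete multipartite frame graph is spanned by $3$--cycles --- by a different route than the paper. The paper fixes $v_1\in\cB_1$, $v_2\in\cB_2$, builds an explicit spanning tree (edges from $v_1$ to every vertex outside $\cB_1$, and from $v_2$ to every vertex of $\cB_1\setminus\{v_1\}$), and decomposes only the resulting \emph{fundamental} cycles, which it classifies into $3$--cycles and $4$--cycles of the form $(u,w,v_1,v_2)$; this has the advantage of producing a concrete determining set with an explicit count of the $m$--products needed, in the spirit of Theorem \ref{nbargmanprojunieqII}. You instead argue by induction on cycle length, splitting any chorded cycle along a chord, showing that a chordless cycle in a complete multipartite graph has length exactly $4$ and lies across two parts, and then writing such a $4$--cycle $(v_1,w_1,v_2,w_2)$ as the $\ZZ_2$--sum of the four triangles through an auxiliary vertex $x$ in a third basis. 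Your version is more self-contained (it handles \emph{every} cycle, not just the fundamental ones), and it isolates precisely where $k\ge3$ is used. It is also slightly more careful on one point: the paper's decomposition of the type--2 fundamental $4$--cycle into $(u,w,v_2)$ and $(v_1,v_2,w)$ tacitly uses the edge $wv_2$, which does not exist when $w\in\cB_2\setminus\{v_2\}$ (that $4$--cycle is then chordless); your third-basis trick covers exactly that case, so your argument in fact patches a small gap in the paper's proof.
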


\begin{proof}
It suffices to show the cycle space of the frame graph $\gG$ of $\Phi$ 
has a basis of $3$--cycles.
To this end, let
$\cB_j$,
$j=1,\ldots,k$, be the orthonormal bases for $\Cd$,
so that $\gG$ is a complete $k$--partite graph
(with partite sets $\cB_j$).
Fix $v_1\in\cB_1$ and $v_2\in\cB_2$.
A spanning tree $\cT$ for $\gG$ is given by taking an edge
from $v_1$ to each vertex of $\cB_j$, $j\ne 1$, and 
an edge from $v_2$ to each vertex of $\cB_1\setminus v_1$.
Each of the remaining 
edges of $\gG\setminus\cT$ 
gives a fundamental cycle. These have two types
(see Fig.\ 3):
\begin{enumerate}
\item ${1\over 2}d^2(k-1)(k-2)$ edges between vertices in $\cB_r$ and $\cB_s$, $r,s\ne1$,
which give fundamental $3$--cycles (involving $v_1$).
\item $(d-1)((k-1)d-1)$ edges between vertices $u\in\cB_1\setminus v_1$ and
$w\in\cup_{j\ne1}\cB_j\setminus v_2$,
which give fundamental $4$--cycles $(u,w,v_1,v_2)$.
These can be written as a union of the $3$--cycles
$(u,w,v_2)$ and $(v_1,v_2,w)$.
\end{enumerate}
Thus the cycle space is spanned by $3$--cycles.
\end{proof}

\begin{figure}[ht]
\centering
\begin{subfigure}[b]{0.34\textwidth}
\begin{tikzpicture}[scale=2.5]
\def \n {9}
\def \circleangle {360/3}
\tikzstyle{every node}=[draw,shape=circle];
\path (0:1cm) node (1) {};
\path (20:1cm) node (2) {};
\path (40:1cm) node (3) {};
\path ({\circleangle}:1cm) node (4) {};
\path ({20+\circleangle}:1cm) node (5) {};
\path ({40+\circleangle}:1cm) node (6) {};
\path ({2*\circleangle}:1cm) node (7) {};
\path ({20+2*\circleangle}:1cm) node (8) {};
\path ({40+2*\circleangle}:1cm) node (9) {};

\foreach \x in {1,...,3}{%
        \foreach \y in {4,...,\n}{%
                \draw (\x) -- (\y);
        }
}

\foreach \x in {4,...,6}{%
        \foreach \y in {7,...,\n}{%
                \draw (\x) -- (\y);
        }
}
\end{tikzpicture}
\end{subfigure}
\qquad
\begin{subfigure}[b]{0.34\textwidth}
\begin{tikzpicture}[scale=2.5]
\def \n {3}
\def \circleangle {360/\n}
\tikzstyle{every node}=[draw,shape=circle];
\path (0:1cm) node (1) {$v_1$};
\path (20:1cm) node (2) {};
\path (40:1cm) node (3) {};
\path ({\circleangle}:1cm) node (4) {$v_2$};
\path ({20+\circleangle}:1cm) node (5) {};
\path ({40+\circleangle}:1cm) node (6) {};
\path ({2*\circleangle}:1cm) node (7) {};
\path ({20+2*\circleangle}:1cm) node (8) {};
\path ({40+2*\circleangle}:1cm) node (9) {};
\draw (1) -- (4)
(1) -- (5)
(1) -- (6)
(1) -- (7)
(1) -- (8)
(1) -- (9)
(4) -- (2)
(4) -- (3);
\end{tikzpicture}
\end{subfigure}
\\
\begin{subfigure}[b]{0.34\textwidth}
\begin{tikzpicture}[scale=2.5]
\def \n {3}
\def \circleangle {360/\n}
\tikzstyle{every node}=[draw,shape=circle];
\path (0:1cm) node (1) {$v_1$};
\path (20:1cm) node (2) {};
\path (40:1cm) node (3) {};
\path ({\circleangle}:1cm) node (4) {};
\path ({20+\circleangle}:1cm) node (5) {};
\path ({40+\circleangle}:1cm) node (6) {};
\path ({2*\circleangle}:1cm) node (7) {};
\path ({20+2*\circleangle}:1cm) node (8) {};
\path ({40+2*\circleangle}:1cm) node (9) {};
\draw (1) -- (4)
(4) -- (8)
(8) -- (1);
\end{tikzpicture}
\end{subfigure}
\qquad
\begin{subfigure}[b]{0.34\textwidth}
\begin{tikzpicture}[scale=2.5]
\def \n {3}
\def \circleangle {360/\n}
\tikzstyle{every node}=[draw,shape=circle];
\path (0:1cm) node (1) {$v_1$};
\path (20:1cm) node (2) {};
\path (40:1cm) node (3) {$u$};
\path ({\circleangle}:1cm) node (4) {$v_2$};
\path ({20+\circleangle}:1cm) node (5) {};
\path ({40+\circleangle}:1cm) node (6) {};
\path ({2*\circleangle}:1cm) node (7) {};
\path ({20+2*\circleangle}:1cm) node (8) {$w$};
\path ({40+2*\circleangle}:1cm) node (9) {};
\draw (1) -- (4)
(4) -- (3)
(3) -- (8)
(8) -- (1);
\end{tikzpicture}
\end{subfigure}
\caption{The proof of Theorem \ref{MUBtheorem} for
MUBs $\cB_1,\cB_2,\cB_3$ in $\C^3$.
The frame graph $\gG$, the spanning tree $\cT$, and fundamental 
cycles of type $1$ and $2$.}
\end{figure}
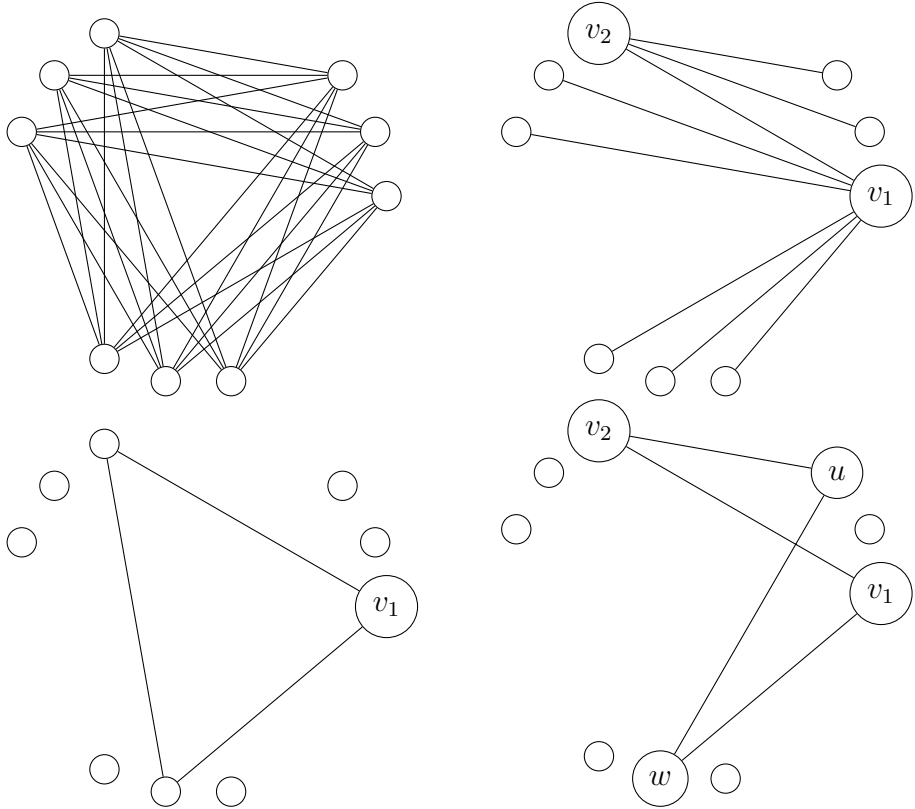

The maximal number of MUBs is of interest in
quantum information theory.
For $d$ a prime, or a power of a prime, the maximal number of MUBs 
in $\Cd$ is $d+1$, see 
\cite{bengtsson2006three}, 
\cite{ivonovic1981geometrical}, 
\cite{wootters1989optimal} for constructions.
These have a special (Heisenberg) structure, which has been
used to classify them up to projective unitary equivalence,
see \cite{kantor2012mubs}, \cite{godsil2009equiangular},
\cite{bengtsson2006three}. Our classification using $3$--products
does not presuppose any structure on the MUBs.

There exists graphs which are not chordal, with
every edge on a $3$--cycle (as is the case for the
frame graph of three or more MUBs), but for which
the cycle space is not spanned by $3$--cycles 
(see Fig.\ 4).

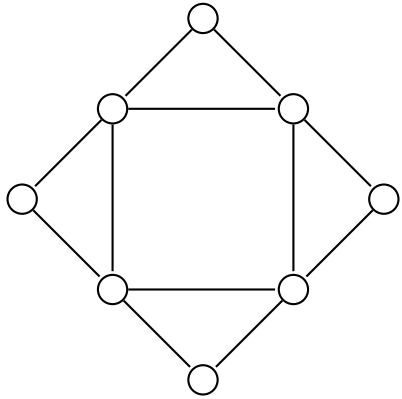
\begin{figure}[ht]
\centering
\begin{tikzpicture}[>=stealth',shorten >=1pt,auto,node distance=1.7cm,
  thick,main node/.style={circle,fill=white!1,draw,font=\sffamily\Large\bfseries}]

  \node[main node] (1) {};
  \node[main node] (2) [below left of=1] {};
  \node[main node] (3) [below right of=1] {};
  \node[main node] (4) [below left of=2] {};
  \node[main node] (5) [below right of=3] {};
  \node[main node] (6) [below right of=4] {};
  \node[main node] (7) [below left of=5] {};
  \node[main node] (8) [below right of=6] {};

  \path[every node/.style={font=\sffamily\small}]
    (1) edge node {} (2)
    (1) edge node {} (3)
    (2) edge node {} (3)
    (2) edge node {} (4)
    (2) edge node {} (6)
    (3) edge node {} (5)
    (3) edge node {} (7)
    (4) edge node {} (6)
    (5) edge node {} (7)
    (6) edge node {} (7)
    (6) edge node {} (8)
    (7) edge node {} (8);
\end{tikzpicture}
\caption{A nonchordal graph for which each edge is on a $3$--cycle.}
\end{figure}

\section{Similarity and $m$--products for vector spaces}

Using the theory of frames for vector spaces \cite{W11}, 
one can give analogous results for vector spaces, where 
the role of unitary equivalence is played by ``similarity'',
and the role of $m$--products by ``canonical $m$--products''.
This allows the ``projective symmetry group'' to be defined
in a very general setting (see \cite{CW14}).

Let $\Phi=(v_j)$ and $\Psi=(w_j)$ be finite sequences of vectors which span
vector spaces  $X$ and $Y$ 
over a subfield $\FF$ of $\CC$. 
We say that $\Phi$ and $\Psi$ are {\bf similar} if there is an invertible
linear map $Q:X\to Y$ with
$$ w_j=Qv_j, \qquad \forall j, $$
and
{\bf projectively similar} if there is an invertible
linear map $Q:X\to Y$ and unit scalars $c_j$ with
$$ w_j= c_jQv_j, \qquad \forall j. $$
For a finite sequence $\Phi=(v_j)_{j\in J}$ in $X$ the 
{\bf synthesis map} is
$$ V=[v_j]_{j\in J}:\FF^J\to X: a\mapsto \sum_j a_j v_j. $$
The subspace of all linear dependencies between the vectors 
of $\Phi$ is
$$ \dep(\Phi):=\ker(V)
=\{ a\in\FF^J: \hbox{$\sum _j a_jv_j=0$}\}, $$
and we denote the orthogonal projection onto 
$\dep(\Phi)^\perp$ (orthogonal complement) by $P_\Phi$.

We have following characterisation of similarity
in terms of linear dependencies.

\begin{lemma} (\cite{W11})
\label{lindeplemma}
Let $\Phi=(v_j)_{j\in J}$ and $\Psi=(w_j)_{j\in J}$ be spanning sequences
for the $\FF$--vector spaces $X$ and $Y$.
Then the following are equivalent
\begin{enumerate}[(a)]
\item $\Phi$ and $\Psi$ are similar, i.e.,
there is a invertible linear map $Q:v_j\mapsto w_j$.
\item $\dep(\Phi)=\dep(\Psi)$ (the dependencies are equal).
\item $P_\Phi=P_\Psi$ (the associated projections are equal).
\end{enumerate}
\end{lemma}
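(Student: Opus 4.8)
The cleanest route is to establish $\text{(b)}\iff\text{(c)}$ directly, and then close the loop $\text{(a)}\implies\text{(b)}\implies\text{(a)}$. The key structural fact to exploit is that the spanning hypothesis forces the synthesis maps $V=[v_j]_{j\in J}\colon\FF^J\to X$ and $W=[w_j]_{j\in J}\colon\FF^J\to Y$ to be surjective, so each is, up to passing to the quotient by its kernel, an isomorphism. I expect the only point requiring any care is the well-definedness of the candidate map $Q$ in $\text{(b)}\implies\text{(a)}$, which is exactly where surjectivity of $V$ and the hypothesis $\dep(\Phi)=\dep(\Psi)$ get used.

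For $\text{(b)}\iff\text{(c)}$: by definition $P_\Phi$ is the orthogonal projection onto $\dep(\Phi)^\perp$ and $P_\Psi$ the one onto $\dep(\Psi)^\perp$. A closed subspace of the inner product space $\FF^J$ is recovered from the orthogonal projection onto its orthogonal complement (it is the kernel of that projection, equivalently the range of $I-P$), so $\dep(\Phi)=\dep(\Psi)$ holds precisely when $P_\Phi=P_\Psi$. For $\text{(a)}\implies\text{(b)}$: if $Q\colon X\to Y$ is an invertible linear map with $w_j=Qv_j$ for all $j$, then $W=QV$, and since $Q$ is injective $\ker W=\ker(QV)=\ker V$; that is, $\dep(\Psi)=\dep(\Phi)$.

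For $\text{(b)}\implies\text{(a)}$, the heart of the argument: assume $\ker V=\ker W=:N$. Define $Q\colon X\to Y$ as follows. Given $x\in X$, use surjectivity of $V$ to pick $a\in\FF^J$ with $Va=x$, and set $Qx:=Wa$. This is well defined: if $Va=Va'$ then $a-a'\in\ker V=N=\ker W$, so $Wa=Wa'$. Linearity of $Q$ is immediate from linearity of $V$ and $W$. By construction $Qv_j=Q(Ve_j)=We_j=w_j$ for every $j$, where $e_j$ is the $j$th standard basis vector of $\FF^J$; equivalently $W=QV$. Finally $Q$ is invertible: by the symmetric construction (swapping the roles of $\Phi$ and $\Psi$, using $\ker W=\ker V$ and surjectivity of $W$) one obtains a linear map $Q'\colon Y\to X$ with $V=Q'W$; then $Q'Qv_j=Q'w_j=v_j$ and $QQ'w_j=Qv_j=w_j$ for all $j$, and since $(v_j)$ spans $X$ and $(w_j)$ spans $Y$ this gives $Q'Q=I_X$ and $QQ'=I_Y$. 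Hence $Q$ is the required invertible linear map taking $v_j$ to $w_j$, proving $\text{(a)}$. This completes the cycle and hence the equivalence of (a), (b), (c).

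\textbf{Main obstacle.} There is no deep difficulty; the proof is a short diagram chase once one recognises that a spanning sequence is the same data as a surjection from $\FF^J$. The one place to be attentive is that $Q$ in the step $\text{(b)}\implies\text{(a)}$ is genuinely well defined — this is precisely the assertion $\dep(\Phi)\subseteq\dep(\Psi)$ — and that the inverse of $Q$ is produced symmetrically, which needs the full equality $\dep(\Phi)=\dep(\Psi)$, not just one inclusion.
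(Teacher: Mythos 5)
Your proof is correct. Note that the paper does not actually prove Lemma \ref{lindeplemma} --- it is quoted from \cite{W11} --- so there is no in-paper argument to compare against; your route (establishing (b)$\iff$(c) from the bijection between subspaces of $\FF^J$ and their orthogonal projections, and (b)$\implies$(a) by factoring the surjective synthesis maps $V,W$ through their common kernel) is the standard one and is consistent with how the paper later uses the lemma, namely that $\Phi$ is similar to the columns $(P_\Phi e_j)$, which follows from your argument since $P_\Phi$ is the synthesis map of its own columns and $\ker P_\Phi=\dep(\Phi)$. You correctly isolate the two points of care: well-definedness of $Q$ uses the inclusion $\dep(\Phi)\subseteq\dep(\Psi)$, and invertibility uses the reverse inclusion via the symmetric construction of $Q'$. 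The only caveat worth recording is inherited from the paper's setup rather than introduced by you: for $P_\Phi$ to be defined over $\FF$ one implicitly needs $\FF$ closed under complex conjugation (as the paper assumes in Theorem \ref{ChartheoremII}), since the projection matrix onto $\dep(\Phi)^\perp$ involves conjugates of entries; your equivalence (b)$\iff$(c) is valid whenever $P_\Phi$ and $P_\Psi$ make sense.
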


The proof of Lemma \ref{lindeplemma} shows that $\Phi=(v_j)$ is similar to 
columns of $P=P_\Phi$. These columns $(P e_j)$ span a subspace of $\FF^J$,
which inherits the Euclidean inner product. Indeed
$$ \inpro{Pe_j,Pe_k} = P_{jk}, $$
i.e., the Gramian of $(Pe_j)$ is $P=P_\Phi$.
We will call the $m$--products of $(Pe_j)$ the
{\bf canonical $m$--products} of $(v_j)$, and denote them
\begin{equation}
\label{canmprodef}
\gD_C(v_{j_1},\ldots,v_{j_m}) 
:= \gD(Pe_{j_1},\ldots,Pe_{j_m})
= P_{j_1j_2}P_{j_2j_3}\cdots P_{j_mj_1}. 
\end{equation}

In this way, we may apply Theorem  \ref{Chartheorem}.

\begin{theorem}
\label{ChartheoremII} (Characterisation)
Let $\Phi=(v_j)$ and $\Psi=(w_j)$ be finite sequences of vectors in 
vector spaces over a subfield $\FF$ of $\CC$ which is
closed under complex conjugation.
Then
\begin{enumerate}
\item $\Phi$ and $\Psi$ are similar if and only if $P_\Phi=P_\Psi$.
\item $\Phi$ and $\Psi$ are projectively similar if and only if
their canonical $m$--products (for a determining set) are equal.
\end{enumerate}
\end{theorem}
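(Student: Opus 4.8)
The plan is to reduce Theorem~\ref{ChartheoremII} to the already-established results for Gramians by passing to the canonical representatives $(Pe_j)$ with $P=P_\Phi$. First I would record the key fact, extracted from the proof of Lemma~\ref{lindeplemma}, that any spanning sequence $\Phi=(v_j)$ is similar to the sequence of columns $(Pe_j)$ of its associated projection $P=P_\Phi$, and that the Gramian of $(Pe_j)$ is exactly $P$ (since $\inpro{Pe_j,Pe_k}=(P^*P)_{jk}=P_{jk}$). Part~1 is then immediate: if $\Phi$ and $\Psi$ are similar, then $\Phi$ is similar to $(P_\Phi e_j)$ and $\Psi$ to $(P_\Psi e_j)$, and similarity is an equivalence relation, so $(P_\Phi e_j)$ and $(P_\Psi e_j)$ are similar; conversely, Lemma~\ref{lindeplemma}(c) already states $\dep(\Phi)=\dep(\Psi)\iff P_\Phi=P_\Psi\iff$ similarity. (Strictly this is just a restatement of Lemma~\ref{lindeplemma}, included for symmetry with Part~2.)

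For Part~2, the crucial observation is that projective similarity of $\Phi$ and $\Psi$ is equivalent to projective unitary equivalence of the canonical sequences $(P_\Phi e_j)$ and $(P_\Psi e_j)$. I would argue this as follows: if $w_j=c_j Q v_j$ for an invertible $Q$, then scaling vectors by the $c_j$ does not change $\dep$ up to the obvious rescaling—more precisely, $(c_j v_j)$ has associated projection $D P_\Phi D^{-1}$ conjugated appropriately—so the cleanest route is to note that projective similarity of $\Phi,\Psi$ holds iff there is a diagonal unitary $C=\diag(c_j)$ with $(c_j w_j)$ similar to $(v_j)$, i.e.\ (by Part~1) with $P_{(c_j w_j)}=P_\Phi$. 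Now $P_{(c_j w_j)} = C^* P_\Psi C$ when $|c_j|=1$ (the linear dependencies of $(c_j w_j)$ are $C^{-1}\dep(\Psi)$, and since $C$ is unitary this transforms the orthogonal projection by conjugation). Hence $\Phi$ and $\Psi$ are projectively similar iff $P_\Phi = C^* P_\Psi C$ for some diagonal unitary $C$, which is precisely condition~\eqref{projGramcdn} for the Gramians $P_\Phi,P_\Psi$ of the sequences $(P_\Phi e_j),(P_\Psi e_j)$; i.e.\ these two sequences are projectively unitarily equivalent.

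Having made that reduction, Part~2 follows by applying Theorem~\ref{nbargmanprojunieq} (or Theorem~\ref{nbargmanprojunieqII} for a determining set) to the sequences $(P_\Phi e_j)$ and $(P_\Psi e_j)$: they are projectively unitarily equivalent iff their $m$--products agree for a determining set, and by definition~\eqref{canmprodef} the $m$--products of $(P_\Phi e_j)$ are exactly the canonical $m$--products $\gD_C(v_{j_1},\dots,v_{j_m})$, and likewise for $\Psi$. One caveat on ``determining set'': the frame graph here is the graph with an edge $jk$ iff $P_{jk}\ne0$, which depends only on $P_\Phi$; since projective similarity forces $P_\Phi$ and $P_\Psi$ to be diagonal-unitarily conjugate, they share this frame graph, so ``a determining set'' is well-defined and the same for both.

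\textbf{Main obstacle.} The routine calculations are harmless; the one point needing care is the claim that rescaling a spanning sequence by unit scalars $c_j$ conjugates the associated projection $P_\Phi$ by the diagonal unitary $C$—i.e.\ that $\dep((c_j v_j)) = C^{-1}\dep(\Phi)$ and that, because $C$ is \emph{unitary} (not merely invertible), the orthogonal projection onto the perp of this subspace is $C^* P_\Phi C$. This uses $C^\perp(W) = C(W^\perp)$ only when $C$ is unitary, which is exactly why the scalars must satisfy $|c_j|=1$; it is the hinge that makes ``projective similarity'' line up with condition~\eqref{projGramcdn} rather than a more general congruence. I would state this as a short lemma or inline remark and then the theorem drops out.
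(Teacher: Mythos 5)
Your proposal is correct and follows essentially the same route as the paper: both reduce projective similarity to projective unitary equivalence of the canonical sequences $(P_\Phi e_j)$ and $(P_\Psi e_j)$ via the identity $P_{(c_jv_j)}=C^*P_\Phi C$ (which the paper dismisses as ``a simple calculation'' and you rightly flag as the one step needing the unitarity of $C$), and then invoke the $m$--products characterisation. The only cosmetic differences are that you scale $\Psi$ rather than $\Phi$ and cite Theorem~\ref{nbargmanprojunieq}/\ref{nbargmanprojunieqII} where the paper cites Theorem~\ref{Chartheorem}.
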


\begin{proof} 
The first follows from Lemma \ref{lindeplemma},
and implies that $\Phi$ and $\Psi$ are projectively similar, i.e.,
$$ w_j= c_j Q v_j = Q(c_j v_j), \qquad\forall j, $$
if and only if $\Psi=(w_j)$ and $\Phi'=(c_j v_j)$ are similar,
for some choice of unit scalars $(c_j)$, i.e.,
\begin{equation}
\label{similarityeqn}
P_\Psi = P_{(c_jv_j)} = C^* P_\Phi C.
\end{equation}
Here the last equality follows by a simple calculation.
Since $\Phi$ and $\Psi$ are similar to $(P_\Phi e_j)$ and 
$(P_\Psi e_j)$,
which have Gram matrices $P_\Phi$ and $P_\Psi$,
it follows from  (1.2) that (\ref{similarityeqn}) is equivalent
to $(P_\Phi e_j)$ and $(P_\Psi e_j)$ being projectively unitary
equivalent, 
and by Theorem \ref{Chartheorem},
this is equivalent to their $m$--products,
i.e., the canonical $m$--products of $(v_j)$ and $(w_j)$ being equal.
\end{proof}

For the case of projective similarity, 
one can calculate the $c_j$ and $Q$ in $w_j=c_j Qv_J$ explicitly,
as we now illustrate.

\begin{example}
Suppose that $\Phi=(v_j)$ spans a $2$--dimensional space,
i.e.,
$$ \ga v_1+\gb v_2+\gam v_3=0, \qquad |\ga|^2+|\gb|^2+|\gam|^2=1. $$
Then $\dep(\Phi)=\spam\{u\}$, $u=(\ga,\gb,\gam)$, so that
$$ P_\Phi= I-uu^*
= \pmat{ 
1-|\ga|^2 & -\ga\overline{\gb} & -\ga\overline{\gam} \cr
-\overline{\ga}\gb & 1-|\gb|^2 & -\gb\overline{\gam} \cr
-\overline{\ga}\gam & -\overline{\gb}\gam & 1-|\gam|^2 \cr
}. $$
The canonical $2$--products are uniquely determined by
$|a|,|b|,|c|$, e.g.,
$$ \gD_C(v_1,v_1) = (1-|\ga|^2)^2, \qquad
\gD_C(v_1,v_2) = |-\overline{\ga}\gb|^2= |\ga|^2|\gb|^2, $$
as is the canonical $3$--product corresponding to the unique
$3$--cycle
$$\gD_C(v_1,v_2,v_3) 
= (-\overline{\ga}\gb)(-\overline{\gb}\gam)(-\ga\overline{\gam})
= -|\ga|^2|\gb|^2|\gam|^2. $$
Thus if $\Psi=(w_j)$ is given by
$$ \tilde\ga w_1+\tilde\gb w_2+\tilde\gam w_3=0, 
\qquad |\tilde\ga|^2+|\tilde\gb|^2+|\tilde\gam|^2=1, $$
then
\begin{enumerate}
\item $\Phi$ is similar to $\Phi$ if and only if
$P_\Psi=P_\Phi$, i.e.,
$$\tilde\ga\overline{\tilde\gb}=\ga\overline{\gb},\qquad
\tilde\ga\overline{\tilde\gam}=\ga\overline{\gam},\qquad
\tilde\gb\overline{\tilde\gam}=\gb\overline{\gam}. $$
\item $\Phi$ is projectively similar to $\Phi$ if and only if
their canonical $m$--products are equal, 
i.e., 
$$|\tilde\ga|=|\ga|, \qquad |\tilde\gb|=|\gb|,\qquad |\tilde\gam|=|\gam|. $$
\end{enumerate}
When $\Psi$ and $\Phi$ are projectively similar,
i.e., $w_j=c_jQv_j$ (the canonical
$m$--products are equal), one has $P_\Psi=C^*P_\Phi C$.
Here, suppose $\ga,\gb,\gam\ne0$, then we have
$$ \overline{c_1}c_2\ga\overline{\gb}= \tilde\ga\overline{\tilde\gb}, \quad
\overline{c_1}c_3\ga\overline{\gam}= \tilde\ga\overline{\tilde\gam}, \quad
\overline{c_2}c_3\gb\overline{\gam}= \tilde\gb\overline{\tilde\gam}
\Implies
c_2 = {\tilde\ga\over\ga}{\gb\over\tilde{\gb}}c_1, \quad
c_3 = {\tilde\ga\over\ga}{\gam\over\tilde{\gam}}c_1.
$$
where $Q$ is defined by
$$ Qv_1 := \overline{c_1}w_1, \qquad
Qv_2:=\overline{c_2}w_2
= {\ga\over\tilde \ga}{\tilde\gb\over{\gb}}\overline{c_1} w_2. $$
\end{example}

\section{Projectively equivalent harmonic frames}

Orthogonal bases can be generalised as follows
(cf.\ \cite{CK13}, \cite{W15}).
We say, a sequence of $n$ vectors $(v_j)$ is a 
{\bf tight frame} for 
a $d$--dimensional inner product space $\cH$ if for some $A>0$
$$ f = {1\over A} \sum_j \inpro{f,v_j}v_j, \qquad\forall f\in\cH, $$
Examples of tight frames of more than $d$ vectors include SICs,
MUBs, and harmonic frames.

Let $G$ be a finite abelian group of order $n$,
with irreducible characters $\xi\in\hat G$.
Here $\hat G$ is known as the character group
(which is isomorphic to $G$).
Let $J\subset\hat G$, with $|J|=d$,
then any tight frame which is unitarily equivalent to the equal--norm
tight frame for $\C^J\approx\C^d$ given by
$$ \Phi_J=(\xi|_J)_{\xi\in\hat G} $$
is called a {\bf harmonic frame}, and 
is said to be {\bf cyclic} if $G$ is a cyclic group.
This is the class of tight frames which are the orbit
of a group of unitary transformations on $\Cd$, which is
isomorphic to $G$ (see \cite{VW05},\cite{CW11}).
The harmonic frames were studied up to unitary
equivalence in \cite{HW06}, \cite{CW11}.
We now recount some of the basic details.

Let $G$ be a fixed finite abelian group.
Subsets $J$ and $K$ of $G$ are {\bf multiplicatively 
equivalent} if there is an automorphism $\gs:G\to G$ for which
$K=\gs J$. In this case, 
$$\hat \gs:\hat G\to\hat G:
\chi\mapsto \chi\circ\gs^{-1}$$
is an automorphism of $\hat G$, and
$$\inpro{\xi|_J,\eta|_J} = \inpro{\hat\gs\xi|_K,\hat\gs\eta|_K},$$
i.e., $\Phi_J$ and $\Phi_K$ are unitarily equivalent
after reindexing by the automorphism $\hat\gs$.

The {\bf translations} of $G$ are the bijections
$$ \tau_b : G\to G:j\mapsto j-b, \qquad b\in G, $$
and we say $K$ is a {\bf translate} of $J$ if
$K=J-b$, i.e., $K=\tau_b J$.
We define the {\bf affine group} of $G$ to be the group 
of bijections $\pi:G\to G$
generated by the translations and automorphisms, i.e., 
the $|G|\, |\Aut(G)|$ maps of the
form
$$ \pi(g) =\gs(g)-b, \qquad \gs\in\Aut(G),\quad b\in G. $$
If $K=\pi J$, for some $\pi$ in the affine group, we say 
$J$ and $K$ are {\bf affinely equivalent}.

\begin{lemma}
\label{harmonicaffequiv}
If $J$ and $K$ are subsets of a finite abelian group $G$,
which are translates of each other,
then the harmonic frames $\Phi_J$ and $\Phi_K$ are 
projectively unitarily equivalent.
\end{lemma}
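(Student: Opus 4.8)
The plan is to compute the inner products of $\Phi_K$ explicitly when $K = J - b = \tau_b J$, and exhibit an explicit unitary together with explicit unimodular scalars realising the projective unitary equivalence with $\Phi_J$. The key observation is that restricting characters to a translate $K = J - b$ is the same as restricting to $J$ after multiplying each character by a fixed character evaluated at $b$; that is, for $\xi \in \hat G$ and $a \in J$ we have $\xi(a - b) = \xi(a)\overline{\xi(b)}$, so $\xi|_K$ (reindexed via the bijection $a \mapsto a - b$ between $J$ and $K$) equals $\overline{\xi(b)}\,\xi|_J$. Thus each vector of $\Phi_K$ is a unimodular multiple of the corresponding vector of $\Phi_J$.

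First I would fix the reindexing: identify $\C^K$ with $\C^J$ via the bijection $a \mapsto a - b$ from $J$ to $K$, so that $\Phi_K$ becomes the sequence $(\eta_\xi \cdot \xi|_J)_{\xi \in \hat G}$ in $\C^J$, where $\eta_\xi := \overline{\xi(b)}$ has modulus $1$ since $\xi$ is a character of a finite group. Next I would verify the inner-product relation directly:
\begin{align*}
\inpro{\eta_\xi\, \xi|_J,\ \eta_\zeta\, \zeta|_J}
&= \eta_\xi \overline{\eta_\zeta} \inpro{\xi|_J, \zeta|_J}
= \overline{\xi(b)}\,\zeta(b)\, \inpro{\xi|_J,\zeta|_J},
\end{align*}
which is exactly the relation $\inpro{w_\xi, w_\zeta} = c_\xi \overline{c_\zeta} \inpro{v_\xi, v_\zeta}$ of (1.2) with $c_\xi = \overline{\xi(b)}$ and $v_\xi = \xi|_J$, $w_\xi = $ the reindexed $\xi|_K$. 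By the characterisation of projective unitary equivalence via \eqref{projGramcdn}, namely that $\Gram(\Psi) = C\Gram(\Phi)C^*$ with $C = \diag(c_\xi)$ implies the existence of a unitary $U$ with $w_\xi = c_\xi U v_\xi$, this establishes that $\Phi_J$ and $\Phi_K$ (after the reindexing $a \mapsto a - b$) are projectively unitarily equivalent. In fact here $U$ can be taken to be the identity on $\C^J$, so the equivalence is particularly transparent: $\xi|_K = \overline{\xi(b)}\,\xi|_J$ under the identification.

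I expect no serious obstacle; the only point needing care is bookkeeping the reindexing, since the frames $\Phi_J$ and $\Phi_K$ are indexed by the same set $\hat G$ but live a priori in $\C^J$ and $\C^K$, and projective unitary equivalence is a statement about sequences indexed by a common set. Once we use the canonical bijection $J \to K$, $a \mapsto a - b$ to identify $\C^K \cong \C^J$, everything reduces to the one-line character identity $\xi(a - b) = \xi(a)\overline{\xi(b)}$ together with $|\xi(b)| = 1$.
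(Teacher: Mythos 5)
Your proposal is correct and is essentially the paper's own argument: your identification of $\C^K$ with $\C^J$ via $a\mapsto a-b$ is exactly the unitary $U_b:(U_bv)(k)=v(k+b)$ the paper exhibits, and your scalars $c_\xi=\overline{\xi(b)}$ coincide with the paper's $c_\xi=1/\xi(b)$ since character values are unimodular. The extra verification via the Gram matrix condition \eqref{projGramcdn} is harmless but not needed once the unitary and scalars are written down explicitly.
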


\begin{proof}
Suppose $K=J-b$. Since $\Phi_J=(\xi|_J)_{\xi\in\hat G}$, we need to show
$$ \xi|_K = c_\xi U (\xi|_J), \qquad \xi\in\hat G, $$
where $U:\C^J\to\C^K$ is unitary. Let $U_b:\C^J\to\C^K$ be the unitary map
$$ (U_b v )(k) := v(k+b), \qquad k\in K. $$
Since $\xi$ is a character, we have
$$ ( U_b \xi|_J )(k) 
= \xi|_J(k+b)
= \xi(k+b)
= \xi(k)\xi(b)
= \xi|_K(k)\xi(b), $$
and so we can take $U=U_b$ and $c_\xi =1/ \xi(b)$.
\end{proof}

The converse: that projective unitary equivalence 
implies $J$ and $K$ are translates of each other appears to be true.

\begin{theorem}
\label{harmonicequiv}
Suppose $J$ and $K$ are subsets of a finite abelian group $G$. Then 
\begin{enumerate} 
\item If $J$ and $K$ are translates, 
then $\Phi_J$ and $\Phi_K$ are projectively unitarily equivalent.
\item If $J$ and $K$ are multiplicatively equivalent, 
then $\Phi_J$ and $\Phi_K$ are unitarily equivalent after reindexing
by an automorphism.
\item If $J$ and $K$ are affinely equivalent, 
then $\Phi_J$ and $\Phi_K$ are projectively unitarily equivalent after 
reindexing by an automorphism.
\end{enumerate}
\end{theorem}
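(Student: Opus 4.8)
The plan is that parts 1 and 2 are essentially already in hand: part 1 is exactly Lemma \ref{harmonicaffequiv}, and part 2 is the computation recorded in the paragraph preceding that lemma. So the only genuinely new content is part 3, which I will obtain by composing parts 1 and 2.

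For part 2, recall that if $K=\gs J$ with $\gs\in\Aut(G)$, then $\hat\gs\colon\hat G\to\hat G$, $\chi\mapsto\chi\circ\gs^{-1}$, is an automorphism of $\hat G$ and $\inpro{\xi|_J,\eta|_J}=\inpro{\hat\gs\xi|_K,\hat\gs\eta|_K}$ for all $\xi,\eta\in\hat G$. Hence the Gramian of $\Phi_J=(\xi|_J)_{\xi\in\hat G}$ coincides with that of $\Phi_K=(\xi|_K)_{\xi\in\hat G}$ once the index set $\hat G$ of the latter is permuted by $\hat\gs$, and by the well-known characterisation of unitary equivalence via equality of Gramians (Theorem \ref{Chartheorem}(1)) the frames $\Phi_J$ and $\Phi_K$ are unitarily equivalent after reindexing by $\hat\gs$.

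For part 3, suppose $J$ and $K$ are affinely equivalent, say $K=\pi J$ with $\pi(g)=\gs(g)-b$, $\gs\in\Aut(G)$, $b\in G$, and set $J':=\gs J$, so that $K=J'-b=\tau_b J'$. By part 2, $\Phi_J$ and $\Phi_{J'}$ are unitarily, and in particular projectively unitarily, equivalent after reindexing by $\hat\gs$; by part 1 (Lemma \ref{harmonicaffequiv}) applied to $J'$ and its translate $K$, the frames $\Phi_{J'}$ and $\Phi_K$ are projectively unitarily equivalent with the index set $\hat G$ left fixed. Composing the unitary from part 2 with the unitary $U_b$ from the proof of Lemma \ref{harmonicaffequiv} and multiplying the corresponding unit scalars yields $\Phi_J$ and $\Phi_K$ projectively unitarily equivalent after reindexing by $\hat\gs$.

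The only delicate point is the bookkeeping of the reindexing: one must check that the permutation $\hat\gs$ of $\hat G$ used in part 2 and the trivial reindexing in part 1 are compatible, so that the two equivalences really splice into a single projective unitary equivalence up to one automorphism of the index set; this is routine once the index $\xi$ is tracked through both maps, and I do not expect a genuine obstacle. A cleaner phrasing that avoids the bookkeeping is to observe that ``$\Phi_J$ and $\Phi_K$ are projectively unitarily equivalent after reindexing by an automorphism'' is manifestly an equivalence relation on subsets of $G$ (transitivity uses composition of automorphisms of $\hat G$), and parts 1 and 2 show that translations and elements of $\Aut(G)$ move $J$ within a single class; since the affine group is generated by the translations and $\Aut(G)$, part 3 follows immediately.
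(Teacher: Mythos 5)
Your proposal is correct and follows essentially the same route as the paper: part 1 is Lemma \ref{harmonicaffequiv}, part 2 is the Gramian/reindexing computation recorded just before that lemma (cited to \cite{CW11} in the paper), and part 3 is obtained by composing the two, exactly as the paper's one-line proof indicates. Your closing remark about the affine group being generated by translations and automorphisms is a clean way to make the composition step explicit.
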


\begin{proof} 
The first part is Lemma
\ref{harmonicaffequiv}, the second is given in \cite{CW11} (Theorem 3.5),
and third follows by combining the first two.
\ref{harmonicaffequiv}.
\end{proof}

\begin{example} 
\label{complementframesequiv}
Let $p>2$ be a prime. Then all harmonic
frames of $p$ vectors in $\C^2$ are projectively 
unitarily equivalent up to reindexing
(to $p$ equally spaced vectors in $\Rd$).
This follows since there is a unique affine map, taking a
sequence of two distinct elements of $\Z_p$ to any other.
In particular, allowing for reindexing, 
the two harmonic frames of three vectors in $\C^2$
which are unitarily inequivalent (one is real, one is complex)  are 
projectively unitarily equivalent.
\end{example}

The conditions of 1,2,3 
of Theorem \ref{harmonicequiv}
say that $J$ and $K$ are in the same orbit
under action of the {\em group of translations}, the 
{\em automorphism group},
and the {\em affine group} on the subsets of $G$, respectively.
Using this, we were able to calculate the various equivalences using
the computer algebra package MAGMA \cite{BCP97}.
The results of these calculations for the cyclic harmonic
frames are summarised in Fig.\ 5.
These indicate that the number of projective unitary equivalence classes
is much smaller than the number of unitary equivalence classes 
(up to any reindexing).
There are just a few cases where the number of equivalence classes is
smaller than that predicted by the group theoretic calculations,
because there is a reindexing which is {\em not} an automorphism
which makes harmonic frames equivalent. 
This was previously observed in the case of unitary equivalence 
\cite{CW11}.
In these cases the larger group theoretic estimate is given 
in the row below in Fig.\ 5.

\begin{figure}[h!]
\centering
\hbox{\hskip3.6truecm $d=2$\hskip3.1truecm $d=3$ \hskip3.1truecm $d=4$}
\noindent\begin{tabular}[t]{|c|c|c|}
\hline
n & uni & proj \\ \hline
2 & 1 & 1 \\ \hline
3 & 2 & 1 \\ \hline
4 & 3 & 2 \\ \hline
5 & 3 & 1 \\ \hline
6 & 6 & 3 \\ \hline
7 & 4 & 1 \\ \hline
8 & 7 & 3 \\ \hline
9 & 6 & 2 \\ \hline
10 & 9 & 3 \\ \hline
11 & 6 & 1 \\ \hline
12 & 13 & 5 \\ \hline
13 & 7 & 1 \\ \hline
14 & 12 & 3 \\ \hline
15 & 13 & 3 \\ \hline
\end{tabular}
\hskip1truecm
\begin{tabular}[t]{|c|c|c|}
\hline
n & uni & proj \\ \hline
3 & 1 & 1 \\ \hline
4 & 3 & 1 \\ \hline
5 & 3 & 1 \\ \hline
6 & 11 & 3 \\ \hline
7 & 7 & 2 \\ \hline
8 & 16 & 4 \\ 
 & 17 &  \\ \hline
9 & 15 & 3 \\ \hline
10 & 29 & 4 \\ \hline
11 & 17 & 2 \\ \hline
12 & 56 & 9 \\ 
 & 57 &  \\ \hline
13 & 25 & 3 \\ \hline
\end{tabular}
\hskip1truecm
\begin{tabular}[t]{|c|c|c|}
\hline
n & uni & proj \\ \hline
5 & 2 & 1 \\ \hline
6 & 9 & 3 \\ \hline
7 & 7 & 2 \\ \hline
8 & 21 & 6 \\ 
 & 23 & 5 \\ \hline
9 & 23 & 4  \\ 
 & 24 &  \\ \hline
10 & 53 & 9 \\ 
 & 54 &  \\ \hline
11 & 34 & 4\\ \hline
12 & 138 & 21 \\ 
 & 141 &  \\ \hline
\end{tabular} \\
\vskip0.5truecm

\hbox{\hskip3.6truecm $d=5$\hskip3.1truecm $d=6$ \hskip3.1truecm $d=7$}
\noindent\begin{tabular}[t]{|c|c|c|}
\hline
n & uni & proj \\ \hline
5 & 1 & 1 \\ \hline
6 & 4 & 1 \\ \hline
7 & 4 & 1 \\ \hline
8 & 19 & 4 \\ 
 & 20 &  \\ \hline
9 & 23 & 4 \\ 
 & 24 &  \\ \hline
10 & 67 & 9 \\ \hline
11 & 48 & 6 \\ \hline
\end{tabular}
\hskip1truecm
\begin{tabular}[t]{|c|c|c|}
\hline
n & uni & proj \\ \hline
6 & 1 & 1 \\ \hline
7 & 2 & 1 \\ \hline
8 & 11 & 3 \\ \hline
9 & 16 & 3 \\ \hline
10 & 55 & 9  \\ 
 & 56 &  \\ \hline
11 & 48 & 6\\ \hline
\end{tabular}
\hskip1truecm
\begin{tabular}[t]{|c|c|c|}
\hline
n & uni & proj \\ \hline
7 & 1 & 1 \\ \hline
8 & 4 & 1 \\ \hline
9 & 8 & 2 \\ \hline
10 & 32 & 4 \\ \hline
11 & 34 & 4 \\ \hline
12 & 228 & 25 \\
 & 234 & \\ \hline
\end{tabular}
\caption{The number of unitary and projective unitary equivalence classes
(up to reindexing) of cyclic harmonic frames of $n$ vectors in 
$\Cd$, $d=2,\ldots,7$. When the group theoretic estimate 
of Theorem \ref{harmonicequiv}
is larger
(because there are reindexings which are not automorphisms) it is given
in the row below.}
\end{figure}

\eject

\end{document}